\newcommand{\be}{\begin{equation}}
\newcommand{\ee}{\end{equation}}
\newcommand{\bea}{\begin{eqnarray}}
\newcommand{\eea}{\end{eqnarray}}
\newcommand{\bean}{\begin{eqnarray*}}
\newcommand{\eean}{\end{eqnarray*}}
\newcommand{\ben}{\begin{equation}{nonumber}}
\newcommand{\een}{\end{equation}{nonumber}}
\newcommand{\case}[1]{\subsubsection*{Case #1}}
\newcommand{\procedure}[1]{\paragraph*{Procedure #1}}
\numberwithin{equation}{section}
\theoremstyle{definition}
\newtheorem{dfn}{Definition}[section]
\newtheorem{definition}[dfn]{Definition}
\newtheorem{xmpl}[dfn]{Example}
\newtheorem{example}[dfn]{Example}
\theoremstyle{theorem}
\newtheorem{theorem}[dfn]{Theorem}
\newtheorem{thm}[dfn]{Theorem}
\newtheorem{lema}[dfn]{Lemma}
\newtheorem{pro}[dfn]{Proposition}
\newtheorem{proposition}[dfn]{Proposition}
\newtheorem{coro}[dfn]{Corollary}
\newtheorem{corollary}[dfn]{Corollary}
\newcommand{\bdfn}{\begin{dfn}}
\newcommand{\edfn}{\end{dfn}}
\newcommand{\bthm}{\begin{thm}}
\newcommand{\ethm}{\end{thm}}
\newcommand{\blema}{\begin{lema}}
\newcommand{\elema}{\end{lema}}
\newcommand{\bpro}{\begin{pro}}
\newcommand{\bcoro}{\begin{coro}}
\newcommand{\bxmpl}{\begin{xmpl}}
\newcommand{\brmrk}{\begin{rmrk}}
\theoremstyle{remark}
\newtheorem{rmrk}[dfn]{Remark}
\newtheorem{remark}[dfn]{Remark}
\title{Properties of glued knots}
\author[1]{Shane D'Mello}
\author[1]{Vinay Gaba}
\affil[1]{Indian Institute of Science Education and Research, Mohali, India}
\begin{document}

\maketitle

\begin{abstract}
  We study the properties of, glued knots, a sub-class of real rational knots that can be constructed by gluing ellipses. We define an invariant called the gluing degree and relate it to various classical properties of knots and classify all knots up to gluing degree~6.
\end{abstract}

\section{Introduction}
The study of real algebraic knots is one of the tangential questions motivated by Hilbert's Sixteenth problem. Until now, little is known about the classical knot theoretic properties of such knots and the question seems hard to answer in general.

In \cite{bjorklund}, Bj\"orklund introduced the method of constructing a real rational knot of degree~$d_1+d_2$ by gluing two real rational knots of degrees $d_1$ and $d_2$.  We recall the gluing construction:

\begin{thm}[Bj\"orklund]
  Consider two oriented real rational knots $k_1$ and $k_2$ of degree $d_1$ and $d_2$ respectively in $\mathbb{RP}^{3}$ which intersect transversally in a point $p$. Therefore, in a small enough neighbourhood of $p$ the union is isotopic to a pair of intersecting straight line segments. Consider the knot obtained by smoothing the pair of segments in a manner compatible with the orientation, then the resulting knot has a real rational representative of degree~$d_1 + d_2$ which lies completely inside a small tubular neighbourhood of the union of $k_1$ and $k_2$.
\end{thm}

In \cite{ramashane}, it was shown that all classical knots have an isotopy class representative that can be constructed using this method and one can know the degree of the resulting real rational knot that is sufficient. %please see this line...ahead of that is sufficient%
However, we do not yet know if for any fixed degree~$d$, one can construct \emph{all} real rational knots of degree~$d$ using this method. Nevertheless, this method has been used to construct all known real rational knots of low degrees and since the subset of real algebraic knots of degree~$d$ that can be constructed by this method are geometric, there should be stronger connections with its topological properties. So it is worth studying this subset on its own.

Since in this article we are interested in classical knot theoretic properties, in what follows, we restrict our attention to affine real rational knots, which are defined as real rational knots that avoid at least one plane in $\mathbb{RP}^3$ and, therefore, lie in a copy of $\mathbb{R}^3$ embedded in $\mathbb{RP}^3$. All affine real rational knots are of even degree because, by Bezout's theorem, their intersection number with a plane is 0 mod 2.

The simplest non-trivial \emph{affine} real rational knot, up to rigid isotopy, is the ellipse, which is of degree~2. Gluing $k$ ellipses will result in a real rational knot of degree~$2k$. In \cite{ramashane}, it was shown that any knot in $\mathbb{R}^3$ has a real rational representative that can be constructed by gluing ellipses. All this motivates the following natural definition:

\begin{definition}
An (affine) glued knot of degree~$2m$ is a real rational knot that can be constructed by gluing $m$ ellipses.
\end{definition}

Note that the simplest non-trivial real rational knot is the straight line, but that is not affine and lives in $\mathbb{RP}^{3}$. We use ellipses instead because in this article we are focusing only on glued knots in $\mathbb{R}^3$ since they form classical knots, and leave the projective ones for future work.

Furthermore, while the construction was originally devised to construct examples of real rational knots, one can study them without keeping in mind that they necessarily form real algebraic knots of a given degree and can simply treat them as knots that are constructed out of oriented ellipses that are arranged in $\mathbb{RP}^{3}$ a certain way.

Since any classical knot in $\mathbb{R}^3$ can be constructed by gluing ellipses, we focus on the least degree:

\begin{definition}
A (classical) knot in $\mathbb{R}^3$ will be said to have gluing degree~$2m$ if it can be constructed by gluing $m$ ellipses but cannot be constructed with fewer than $m$ ellipses.
\end{definition}

% Has already been defined later
%\begin{definition}
% A preglued knot of degree~$2m$ is a union of $m$ ellipses, $P_1, P_2, \ldots, P_m$  so that $P_i \cap P_j$ is a single point if $j = i+1$ (we call this single point a glued point); otherwise it is empty. Furthermore, the tangents of the ellipses $P_i$ and $P_{i+1}$ are linearly independent in the tangent space of $\mathbb{R}^3$ at the point of intersection. An oriented preglued knot is a preglued knot where each ellipse is oriented.
%\end{definition}

Like the stick number that is defined in terms of polygonal representatives of knots, and the minimal crossing number that is defined in terms of diagrammatic representatives of knots, the gluing degree is also a natural invariant. Like these invariants it is typically hard to compute them for any given knot. Simply producing a representative only provides an upper bound and one has to demonstrate that the knot cannot have a representative with a lower degree. For simpler knots this can be done by classifying glued knots of low degrees. We will do so in section~\ref{classification} and thereby prove that both the trefoil and figure eight have gluing degree~6.   %%%Add some knots of degree 8
   This article also studies some properties of the gluing degree including its relationship with other invariants.

Unlike polygonal representatives, very low degree representatives of an isotopy class of a even simple knots like the trefoil have very non-standard diagrams, making these objects more challenging than their polygonal counterparts. %%%Give an example using a figure

Since glued knots are also real rational knots, in section~\ref{encomplexed} we also investigate the relationship of the gluing degree with the only known real algebraic knot invariants: the encomplexed writhe number (from real rational knots), and the real rational degree and prove a sharp bound on the encomplexed writhe number realized by certain torus knots.  We do not know if the same bound works for general affine real rational knots (which are not necessarily obtained by gluing). In~\cite{mikhalkin}, it was proved that all maximal writhed \emph{projective} real rational knots are isotopic to the projective version of torus knots, however we show that at least for \emph{affine} and glued knots of maximal writhe, this uniqueness is not necessarily true.

It is natural to ask how the gluing degree relates with other invariants of knots. Here we investigate the relationship with these classical knot invariants: the 3-coloring invariant, and the crossing number, and therefore also get some a bound on the span of the Jones polynomial. In section~\ref{3colouring}, we prove the existence of prime glued knots of degree $2k+4$ which are colourable in $3^k$ ways. It can be easily shown that the number of crossings of a diagram of a glued knot of degree~$2n$ is bounded above by $(n-1)(2n-1)$. So the minimal crossing number of a knot is related to its gluing degree~$2n$ by being bounded above by $(n-1)(2n-1)$. However, in section~\ref{rigid}, we prove that the span of the Jones polynomial of an affine glued knot of degree~$2n$ is bounded above by $(2n-1)(n-1)-1$ for prime knots, which is one less than what can be derived from the crossing information alone. This also shows that the crossing number of alternating prime glued knots is bounded above by $(2n-1)(n-1)-1$ as span of the Jones polynomial is equal to the crossing number in that case.

Seen in a certain way, gluing reminds us of the skein relation. Indeed, in section~\ref{skeinrigid} exploit this to arrive at a relationships between the degree of the Jones (respectively Alexander) polynomial of glued knots glued out of $m$ ellipses with the Jones (respectively Alexander) polynomial of $m$ interlinked ellipses, which we call rigid links.

Since any glued knot of degree~$2m$ is also a real rational knot of degree~$2m$, the gluing  degree is always bounded below by the real rational degree. It is still not certain if the real rational degree can be strictly less than the gluing degree, which would be equivalent to finding a real rational knot of a particular degree which is impossible to construct by gluing. Once again, classifying glued knots is a step in that direction.
A large  proportion of the article works out the classification of such knots of low degree. The techniques illustrated in the proofs can be used for higher degrees too.

Since a glued knot is also a real rational knot, we can talk about its writhe number. Here we prove a bound on the maximum writhe number of a glued knot, that is surprisingly much smaller than the maximum possible writhe number of a projective real rational knot, including ones glued out of lines. We do not yet know if maximally writhed affine real rational knots (not necessarily glued ones) have larger bounds on their degree.

%\cite{mikhalkin} proved that all maximally writhed real rational knots of a given degree are isotopic to each other. However, we show that maximally writhed glued knots of a given degree need not be isotopic to each other.%

%It can be easily shown that the number of crossings of a diagram of a glued knot of degree~$2m$ is bounded above by $(d-1)(2m-1)$. So the minimal crossing number of a knot is related to its gluing degree~$2m$ by being bounded above by $(d-1)(2m-1)$. However, we will show that this bound can be reduced by 1.%

\subsection*{Acknowledgments} The first author (D'Mello) is grateful for the MATRICS grant MTR/2018/000728 from SERB (Govt. of India) for supporting this work. The second author (Gaba) is grateful for the IISER Mohali PhD fellowship for its support.

\section{Properties of knots}

\subsection{3-colouring invariant}
\label{3colouring}
A trefoil is 3-colourable in $3^2$ ways and can be glued out of 3 ellipses and, therefore, has degree~6. If two knots, which are situated in $\mathbb{R}^3$ so that they intersect at only one point but are unlinked, are glued, then the glued knot is the connected sum of the individual knots. Therefore, if one takes the connected sum of $k$ copies of the trefoil, it will result in a knot of degree~$6k$ which 3-colourable in $3^{k+1}$ ways. Therefore, we have easily shown that we can always find a knot that is 3-colourable in $3^{k+1}$ ways of degree~$6k$ (and above), however, except for $k=1$, the knots are never prime. We now prove a similar theorem for prime affine knots:

\begin{figure}[h]
\centering
\includegraphics[width=0.5\textwidth]{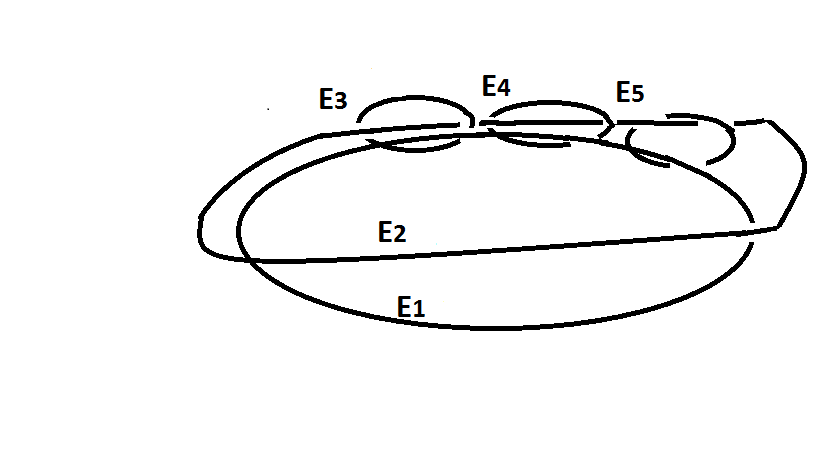}\hfill
\caption{gluing 5 ellipses to get $3^3$ number of 3-colors.}
\label{3_colors}
\end{figure}

\begin{figure}[h]
\centering
\includegraphics[width=0.5\textwidth]{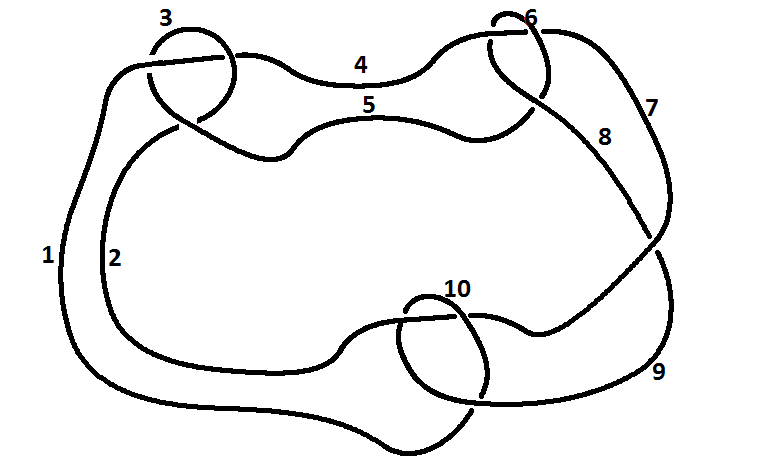}\hfill
\caption{prime alternating knot with $3^3$ number of 3-colours. } \label{murasugi_sum}
\end{figure}

\begin{thm}
  \label{3colourable}
There exists a prime knot of degree~$2k+4$ (and above) which is $3$~colourable in $3^k$ ways, for $k$$\geq3$.
\end{thm}

Before we begin the proof, we introduce a definition that will be used several times:
\begin{definition}
  \label{preglued}
  Consider a configuration of $m$ ellipses so that any two ellipses intersect in at most 1 point. We can associate a graph that encodes the intersections of these ellipses by representing each ellipse by a vertex and joining two vertices  by an edge if and only if the ellipses intersect.  We say this configuration forms a preglued knot of degree~$2m$ if and only if the associated graph is a tree.
  %$P_1, P_2, \ldots, P_m$  so that $P_i \cap P_j$ is a single point if $j = i+1$ (we call this single point a glued point); otherwise it is empty. Furthermore, the tangents of the ellipses $P_i$ and $P_{i+1}$ are linearly independent in the tangent space of $\mathbb{R}^3$ at the point of intersection. An oriented preglued knot is a preglued knot where each ellipse is oriented.
\end{definition}

Note that associated to an oriented preglued curve knot is a glued knot defined by smoothing out all the glued points in a manner compatible with the orientations. We now begin the proof of theorem~\ref{3colourable}.

\begin{proof}
  Consider the preglued knot shown in Figure~\ref{3_colors}. After smoothing It is isotopic to the knot shown in Figure~\ref{murasugi_sum}. If we fix the colour of arc 1 to be $c_1$, then we claim that we can choose any of the 3 colours to colour arc 3. If we denote its colour by $c_3$, then observe we have one of the two cases:
  \begin{enumerate}
  \item[Case 1] $c_3 = c_1$. Then arc 5 will also have to have the same colour because it meets arcs 1 and 3 at the same crossing. However, that forces arc 2 to also have the same colour because it shares a crossing with arcs 5 and 3. Therefore, arcs 2 and 1 share the same colour. Arcs 4 and 5 also share the same colour because they are each forced to share the colour with arc 3.
  \item[Case 2] $c_3 \neq c_1$. Then arc 5 will have to be coloured with the third colour different from $c_3$ and $c_1$. However, since arc 2 meets at the same crossing as arcs 5 and 3, it is forced to have colour $c_1$. Once again, arcs 2 and 1 share the same colour. Once again, arcs 4 and 5 share a colour because arc 4 meets arcs 3 and 1 at a crossing, but arc 5 meets arcs 3 and 2 at a crossing, and since arcs 1 and 2 share the same colour, arcs 4 and 5 cannot be coloured either $c_1$ or $c_3$.
  \end{enumerate}

  So we can choose any colour for arc 3, but no matter what colour we choose, arc 2 always has the same colour as arc 1. Furthermore, the choice of arc 3 fixes the colour for arcs 4 and 5. By a similar reasoning, arcs 4 and 5 share the same colour and one has a choice of 3 colours for arc 6. Continuing in this way, we could add $E_6$ and so on in line with $E_3$, $E_4$ and $E_5$ inductively in the same way and we get the result.

  We now prove that these knots are prime. We use the fact that an alternating knot is prime if it has an alternating prime diagram, i.e. any simple closed curve that cuts the diagram in only two points will enclose a simple arc.

  If a simple closed curve intersects the diagram in exactly 2 points, then the two points separate the curve into two arcs that have to each lie completely in a region that the diagram separates the plane into. If we assume that the knot is not prime then the simple closed curve has to enclose a part of the knot that is not a simple arc. Therefore, the two points cannot lie on the same ``edge'' of the diagram, where we consider the diagram as a 4-valent graph. Now to avoid intersection in any other points, the interior of each arc should be a subset of a region. Also, the regions have to be distinct otherwise the simple closed curve would not intersect the diagram transversally. But this implies that there are at least two regions which share two edges. That follows easily by inspection and induction.
\end{proof}

\subsection{Encomplexed writhe number}
\label{encomplexed}
Any glued knot of degree~$d$ is a real algebraic knot of degree~$d$ and any real algebraic knot has a natural rigid isotopy invariant defined as the encomplexed writhe number~\cite{viroencomplex}.  The maximal writhe number of a real rational knot can be easily proved to be $M_d:=\frac{(d-1)(d-2)}{2}$. Bj\"orklund showed that in $\mathbb{RP}^3$, the bound is sharp and in fact proved it by gluing lines. However, they were all projective knots. Here, we consider the affine counterpart but for the sub-case of glued knots: We prove a bound for the writhe number of affine glued knots and also prove that the bound is sharp. Interestingly, the bound is much smaller for affine glued knots.

\begin{thm}
  The maximum writhe number of an affine glued knot of degree~$2m$ is bounded above by $(m-1)^2$.
\end{thm}

\begin{proof}
  The maximal writhe is computed by adding up the local writhe at each crossing, which is either +1 or -1. However, crossings may also be non-real. In order to predict the local writhe at a non-real crossing, one can exploit the fact that the writhe number is a rigid isotopy and isotope the curve to one that can be projected to a diagram with only real crossings.

  Consider a knot glued out of ellipses. Every crossing from a projection of such a knot to a diagram, comes from some pair of ellipses. Consider the link formed by a pair of ellipses and fix a projection to examine the diagram of that link under the projection. Note that if we glue another real rational knot to either of those ellipses at points that do not project to crossing, it will not change the local writhes. %%explain
 Since we can always choose a projection so that the points of gluing are never crossings, we can compute the local writhes at a crossing by only considering the ellipses contributing to that crossing. Therefore, we have reduced the case to considering the possible local writhes of the crossings of two ellipses.

  We can always rigidly isotope a pair of ellipses to ensure that under the fixed projection they give rise to 4 real crossings. It is easy to check that the sum of the 4 local writhes is between -2 and 2.  If the two ellipses are glued to each other, resulting in one less local writhe, again by inspection we can see that the contribution of the writhe number is between -1 and 1.

  If a knot has been formed by gluing $m$ ellipses, then there are $m\choose 2$ pairs but out of these $m-1$ are glued pairs contributing to a maximum sum of local writhes of 1, while the others contribute to a maximum sum of local writhes of 2. Therefore, the total writhe number can be no more than $2{m\choose 2}-(m-1)$.
\end{proof}

The bound mentioned above on the encomplexed writhe number is sharp:

\begin{figure}[h]
\centering
\includegraphics[width=0.5\textwidth]{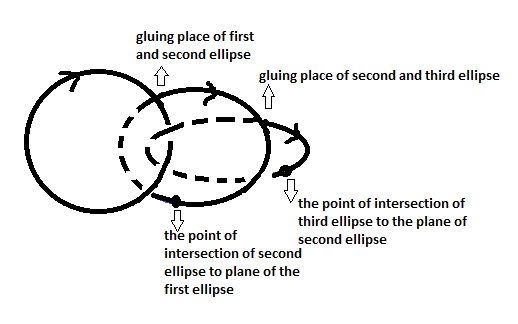}
  \caption{Preglued diagrams of ellipses with maximum writhe}
  \label{colored_diagram}
\end{figure}

\begin{figure}[h]
\centering
\includegraphics[width=0.5\textwidth]{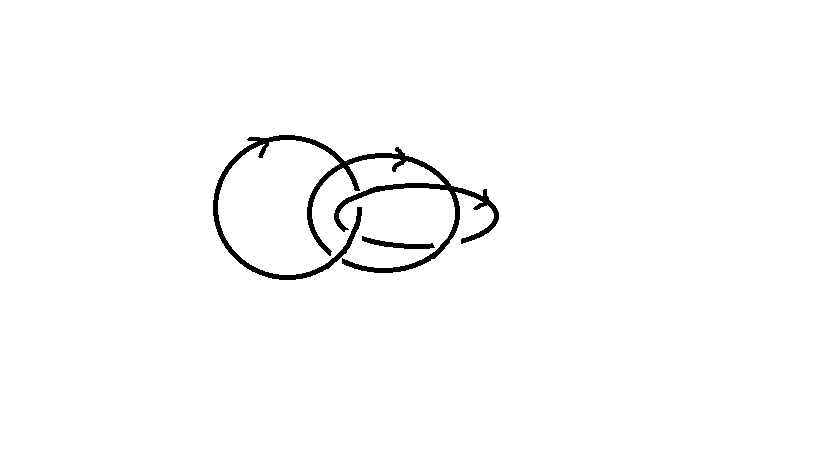}\hfill
\includegraphics[width=0.5\textwidth]{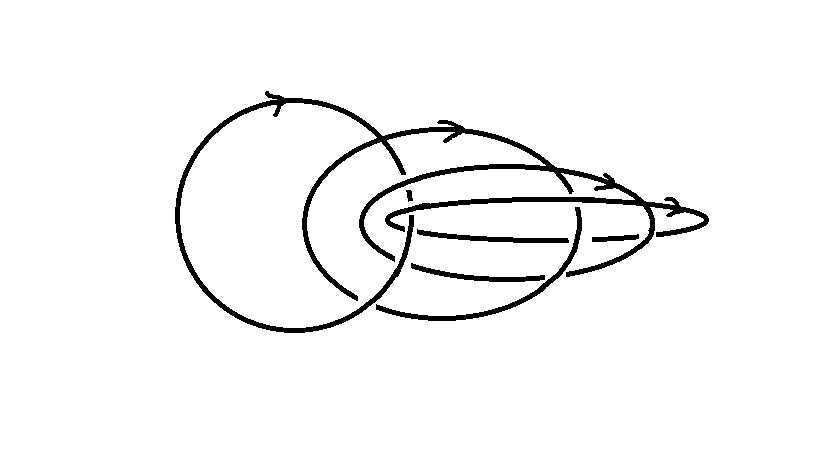}
  \caption{Preglued diagrams corresponding to glued knots with maximum writhe}
  \label{max_writhe}
\end{figure}

\begin{figure}[h]
\centering
\includegraphics[width=0.5\textwidth]{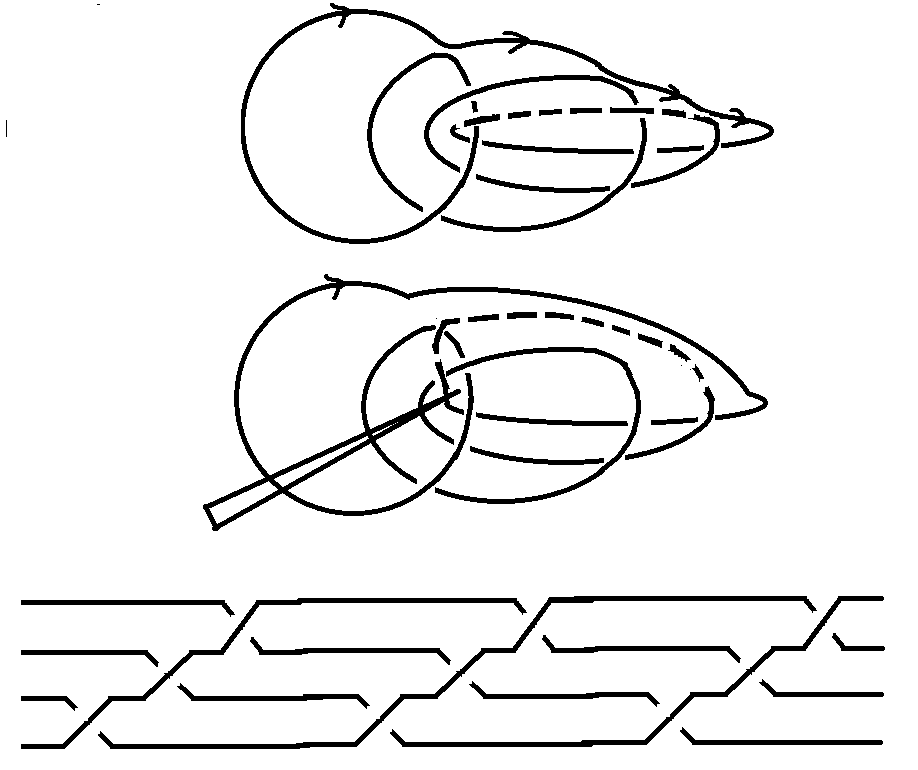}\hfill
\caption{If we perform some moves as shown in figure to a diagram with $m$ ellipses , we get the braid presentation of in the form $(\sigma _{1}\sigma _{2}\cdots \sigma _{{m-1}})^{m-1})$. And this is the braid presentation of the T(m,m-1).}
 \label{ReidemeisterMovesTorus}
\end{figure}

\begin{thm}
  For each $m=1, 2,\ldots$, there exists an affine glued knot of degree~$2m$ with the maximal possible writhe number, $(m-1)^2$, which is isotopic to the torus knot $T(m,m-1)$.
\end{thm}

\begin{proof}
  Figure~\ref{colored_diagram} and ~\ref{max_writhe} show how to construct family of knots, $K_m$, where $K_m$ is glued out of $m$ ellipses, by presenting the configuration before gluing, but the choice of gluing is indicated by the orientation. After performing the kind of moves shown in Figure~\ref{ReidemeisterMovesTorus}, it is straightforward to check that $K_m$ is  a $T(m,m-1)$ torus knot. We will prove, by induction, that the writhe number of $K_m$ is $(m-1)^2$, which is the maximum allowed writhe number of a knot glued out of $m$ ellipses.

  The main idea is to build $K_{m+1}$ by gluing an ellipse to $K_m$ in such a way that it links with all the $m-1$ ellipses that it is not glued to, and with the orientation chosen so as to make the local writhes of the new crossings positive. There are 2 crossings each with the ellipses that it is not glued to, but each crossing is of positive sign owing to linking. There is only one crossing with the ellipse that is glued to, also with positive sign. By the induction hypothesis, the writhe number of $K_m$ is $(m-1)^2$. The new ellipse adds $2(m-1) + 1$ to it, making the writhe number of $K_{m+1}$ equal to $(m+1 -1)^2$, as desired.
\end{proof}

\begin{rmrk}
The bound for affine glued knots is much smaller than the bound for projective ones. We do not yet know if the above bound on the encomplexed writhe number holds for general real rational knots too.  Indeed, \cite{mikhalkin} proved that maximal projective knots are all isotopic and cannot be affine knots. So it follows from their proof that writhe number of affine real rational knots is strictly less than the bound for the projective case but it is not yet clear if and why it can be much smaller in affine real rational knots.
\end{rmrk}

\begin{remark}
In~\cite{mikhalkin}, it was proved that the maximally writhed projective real rational knots are all isotopic. However, in the case of affine glued knots, this is not true: there can be more than one isotopy class of glued affine knots with maximal writhe. Figure~\ref{same_writhe} shows two degree~6 knots of maximal writhe, i.e. 4, but which are not isotopic because one is the unknot and the other is the trefoil.
\end{remark}

\begin{figure}[h]
\centering
\includegraphics[width=0.7\textwidth]{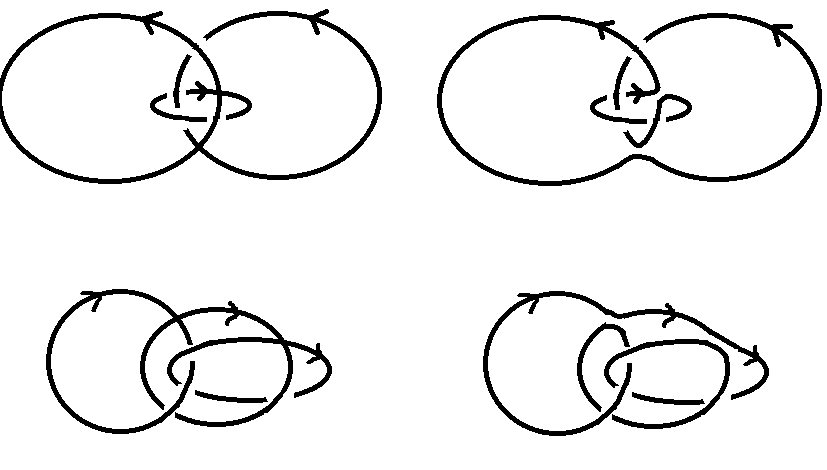}
\caption{Non-isotopic knots with the same maximal writhe}
 \label{same_writhe}
\end{figure}

\subsection{Linear skein relations and rigid pure links}
\label{skeinrigid}

Gluing reminds one of the skein relation because of a smoothing. Can we exploit the skein relation to understand skein relations of glued knots? %% please check this line
 Here, we show that we can relate the skein relation of glued knots with another special class of links.

The study of the isotopy of skew lines \cite{viroskew} of the union of non-intersecting lines in $\mathbb{RP}^3$ up to isotopy. The natural affine counterpart in $\mathbb{R}^3$ is the study of the isotopy of the unions of non-intersecting ellipses, which we will call rigid pure links, and since it is also a variety, it has a degree which turns out to be twice the number ellipses, and therefore:

\begin{figure}[h]
\centering
\includegraphics[width=0.5\textwidth]{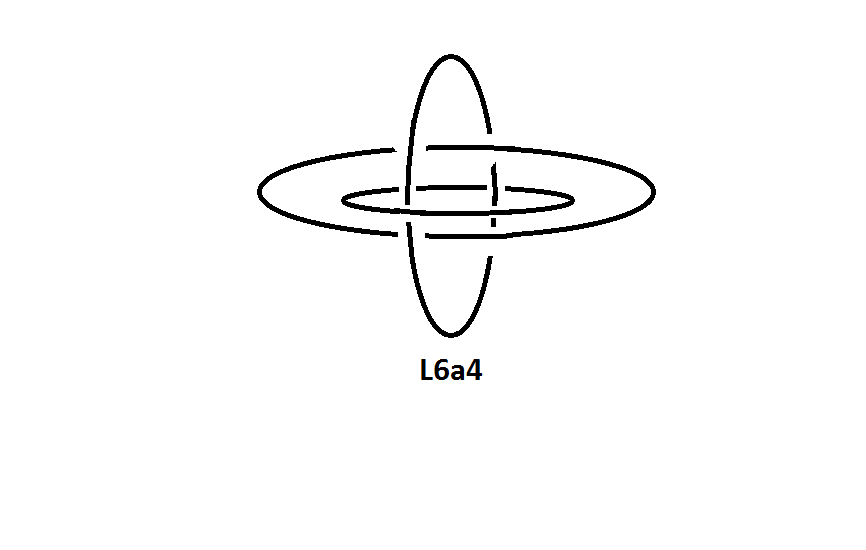}\hfill
\includegraphics[width=0.5\textwidth]{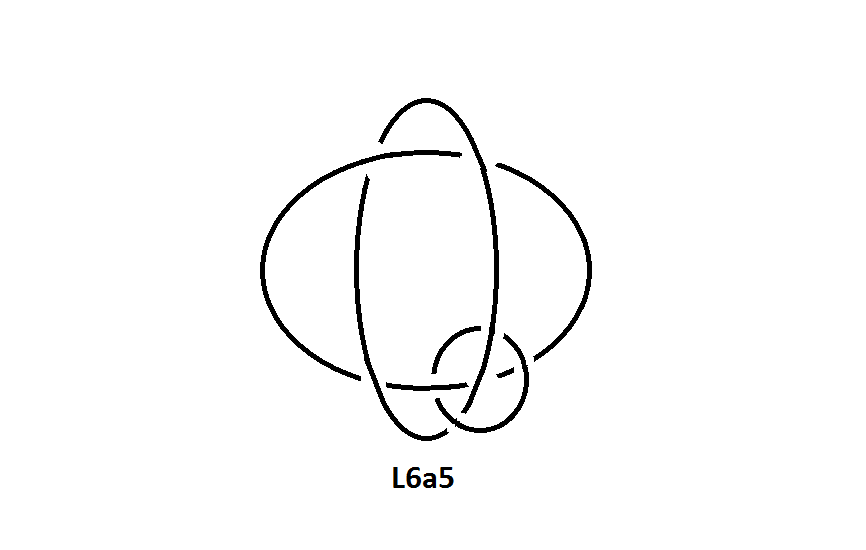}\hfill
\includegraphics[width=0.5\textwidth]{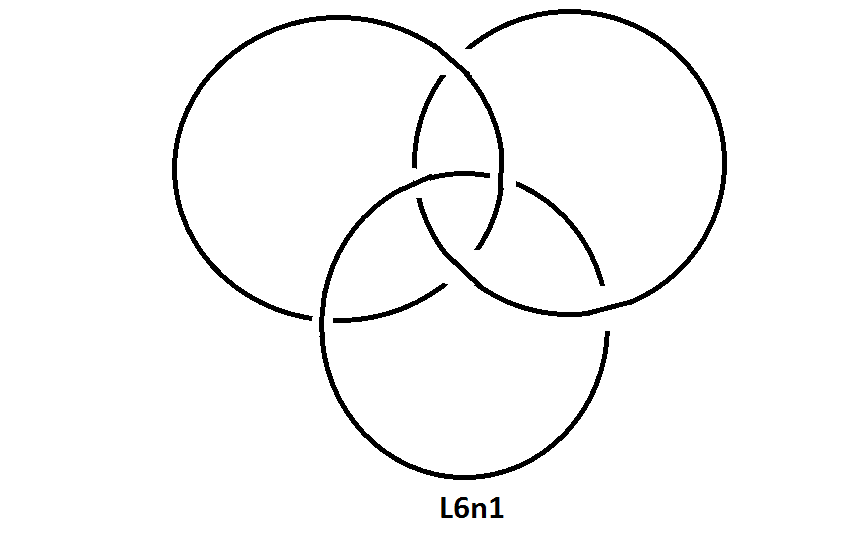}
%%\caption{Preglued ellipses corresponding to $3^{rd}$ and $6^{th}$ case.}
\caption{prime rigid links with three components.}
\label{rigidpure}
\end{figure}

\begin{definition}
  A union of $m$ non-intersecting ellipses in $\mathbb{R}^3$ is called a rigid pure link of degree~$2m$.
\end{definition}

``Rigid'' indicates that the components of the link are rigid ellipses and ``pure'' indicates that no component is knotted with itself, i.e. all crossings arise owing to multiple components. Not every link can be isotopic to a rigid pure link; the only two component rigid pure links can be the unlink and the Hopf link.

Rigid pure links can be subtle; for instance, it is not immediately clear from the usual diagram of the Borromean rings if it can be realized by rigid ellipses. However, the link diagram labelled L6a4 in Figure~\ref{rigidpure} is a way to realize the Borromean rings as a rigid pure link. Other rigid pure links with three ellipses are shown.

\begin{figure}[h]
\centering
\includegraphics[width=0.3\textwidth]{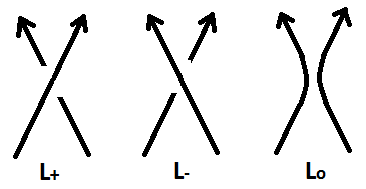}
\caption{Skein relations} 
\label{skein}
\end{figure}  

Consider a knot glued out of $m$ ellipses and, for each point of gluing, consider a small enough neighbourhood in which the knot looks like the $L_0$ shown in Figure~\ref{skein}. After a small perturbation, it would be homeomorphic to either $L_+$  or $L_-$. $L_+$ and $L_-$ are related by a crossing switch.

We can express a skein relation of a skein invariant, $P$, in the form,
\[aP(L_+) + bP(L_-) = P(L_0)\]

Consider an oriented preglued knot $G$ made of $m$ ellipses and fix a planar projection. Note that if we smoothen out the glued points in a manner compatible with the orientation we obtain a glued knot $K_G$, of degree~$2m$. However, if instead of smoothing out the intersection points, they were perturbed, we would get rigid pure links. There are two choices of perturbation for each glued point: one which will result in a crossing of local writhe $+1$ and the other of local writhe $-1$. Therefore, given an oriented preglued knot $G$, for each map $s : X \to \{-1, 1\}$, where $X$ denotes the set of glued points, there is a rigid pure link, which we will denote by $G_s$, obtained by perturbing a glued point $x$ so that a neighbourhood of $x$ is a crossing of sign $s(x)$. 

Given $s$, let $n_+(s)$ denote the number of glued points for which $s$ is positive, and $n_-(s)$ denote the number of glued points for which $s$ is negative. Therefore, $n_+(s) + n_-(s) = m-1$, i.e. the  number of glued points.  Then using the skein relation repeatedly, we obtain,

\[P(K_G) = \displaystyle\Sigma_s a^{n_+(s)} b^{n_-(s)} P(G_s)\]

Applying this to the case where $P=\nabla$, i.e. the Alexander-Conway polynomial, which satisfies the skein relation $1/z\nabla(L_+) - 1/z\nabla(L_-) = \nabla(L_0)$, the relation reduces to,

\[\nabla(K_G) = \displaystyle\Sigma_s 1/z^{n_+(s) + n_-(s)}\nabla(G_s) = (-1)^{n_-(s)} 1/z^{m-1} \displaystyle\Sigma_s \nabla(G_s)\]

We obtain the following restriction on the Alexander-Conway polynomials of glued knots of degree~$2m$ if we know the Alexander-Conway polynomials of all the rigid pure links made out of $m$ ellipses,

\begin{lema}
  If a knot is glued out of $m$ ellipses then the the degree of its Alexander-Conway polynomial is bounded above by the maximum degree of the Alexander-Conway polynomial of a rigid pure link divided by $z^{m-1}$. \qedhere
\end{lema}

On the other hand, if we know the Alexander-Conway polynomials of all the knots of degree~$2m$, then we can confirm the existence of a rigid pure link of at least a certain degree.

\begin{coro}
If there exists a knot that can be glued out of $m$ ellipses whose Alexander-Conway polynomial has degree~$d$, then there exists a rigid pure link whose Alexander-Conway polynomial has at least degree~$d$.
\end{coro}

As a trivial consequence of this, we will see in section~\ref{classification} that the only knot that can be glued out of two ellipses is the unknot, which has Alexander-Conway polynomial 1, and therefore there must exist a rigid pure link made out of two ellipses which has Alexander-Conway polynomial of degree at least 1. Indeed, the Hopf link has this property. We will also see that glued knots of degree~6 can be either the trefoil or the figure 8, both of which have Alexander-Conway polynomials of degree 2. Therefore, there must exist a rigid pure link made out of three ellipses whose Alexander-Conway polynomial is of degree at least 4.
%We can orient each ellipse of the preglued knot. A choice of orientation for each ellipse will be called an orientation of the preglued knot. Therefore, there are $2^m$ possible orientations of a preglued knot. The main reason for defining a preglued knot is that for each orientation of a preglued knot, we obtain a glued knot by gluing $P_i$ to $P_{i+1}$ so that their orientations are compatible. Note that if we reverse the orientation of each ellipse, we get the same glued knot, therefore there are at most $2^{m-1}$ glued knots associated to a given (unoriented) preglued knot.

\subsubsection{Kauffman bracket}
The Kauffman bracket formulation of the Jones polynomial defines a different skein relation that can also be exploited to relate the degree of the Jones polynomial of glued knots with the degree of the Jones polynomial of rigid links.

To express the Jones polynomial of a rigid link in terms of the Jones polynomial of a glued knot, we will need the following lemma:

\begin{lema}
Any rigid pure link is isotopic to a rigid link obtained as the perturbation of a preglued knot.
\end{lema}

\begin{proof}
  Consider a rigid $n$-component link $L_1$ made up of $n$ ellipses. For any $v\in \mathbb{R}^3$ and $t\in \mathbb{R}^{\geq 0}$, define the affine transformation $T_t^v(x) = x + tv$. Choose an ellipse, which we will denote $P_1$, from this link and choose a generic $v$ and $t_0$ that $T_{t_0}^v(P_1)$ intersects the rest of $L$ in exactly one point, but $T_{t}^v(P_1)$ does not intersect $L$ for any $t < t_0$. So $P_1$ intersects exactly one ellipse from $L$ in one point to form a preglued curve $P_2$ made out of two ellipses that links with the the $L_1$ minus these two ellipses, which we will call $L_2$. Observe that by construction, $L_1$ is a small perturbation of $P_2$ linked with $L_2$.

  Now apply the same procedure to $L_2$ that $T_{t_0}^v(P_2)$ intersects the rest of $L_2$ in exactly one point, but $T_{t}^v(P_2)$ does not intersect $L_2$ for any $t < t_0$.  This will form a preglued curve made out of 3 ellipses linking with a link, $L_3$, with $n-2$ components. Repeating this procedure we obtain a preglued knot $P_n$ with $n$ ellipses and $L_n$ is empty, so that $L_1$ is a  perturbation of $P_n$.
\end{proof}

Given a glued knot $K$, the lemma allows us to consider a pre-glued knot $G'_K$ so that the glued knot, $K$, is a perturbation of it. Let $X$ denote the set of ellipses of the pre-glued knot and let $\sigma : X \to \{-1,1\}$ denote a choice of orientations on each ellipse. The choice of orientation will determine whether the smoothing of each glued point is an $A$ smoothing or a $B$ smoothings. Let $G'_\sigma$ denote the knot obtained by a such a smoothing. We then get the following relations between the Kauffman brackets of the diagrams obtained by a fixing a generic projection:

\[\langle K \rangle = A^{n_+(\sigma) - n_-(\sigma)} \langle G'_\sigma \rangle\]

However, the Kauffman bracket is not an invariant because it depends on the diagram but it differs from the Jones polynomial by multiplication by $A^{3w(D)}$. We know that the writhe number of a glued knot is between $-(2{m\choose 2}-(m-1))$ and $2{m\choose 2}-(m-1)$ so the degree of the Jones polynomial of a diagram obtained by projecting a glued knot can differ from its Kauffman bracket by at most $2{m\choose 2}-(m-1)$. Note that $n_+(\sigma) - n_-(\sigma)$ is bounded above by $m-1$, i.e. the number of glued points. Therefore, if $d$ is the largest degree of the Jones polynomial of glued knots glued out of $m$ ellipses, then the degree of $\langle K \rangle$ is bounded above by $m - 1 + 2{m\choose 2}-(m-1) + d = 2{m\choose 2} + d$. Finally, the maximum writhe number of a rigid link is $2{m\choose 2}$, so after multiplying by 3 times the writhe number we get,

\begin{theorem}
If $d_m$ denotes the maximum degree of the Jones polynomial of glued knots glued out of $m$ ellipses, then the degree of the Jones polynomial of the rigid glued link is bounded above by $8{m\choose 2} + d_m$
\end{theorem}

%This time we apply the Kauffman bracket skein relation and obtain the Kauffman bracket of the rigid links in terms of the Kauffman brackets of the glued knots.  Consider a preglued knot described by the previous lemma. Repeatedly applying the skein relation at the points of pregluing, we express the Kauffman bracket of the rigid link in terms of the $2^{n-1}$ knots that can be glued out of the $n$ ellipses in the following manner.
%
%
%Using the above lemma, we express the rigid link as a perturbation of a preglued knot $K$. We can define a state, $s$, of the preglued knot by assigning a +1 or -1 to each point of gluing. Let $K_s$ denote the glued knot obtained by gluing choosing the gluing 
%
%
%    Choose an orientation of the first ellipse $P_1$ of the preglued knot. Now consider the point of intersection of $P_1$ and $P_2$. The given link is a perturbation of the preglued knot and this point of gluing perturbs to a crossing of a diagram. Choose the orientation so of $P_2$ so that on gluing, the smoothing is an $A$ smoothing if this point of gluing was assigned a +1 and a $B$ smoothing if the point of gluing was assigned a -1.
%
%This shows that the span of the Kauffman bracket, and therefore, of the Jones polynomial of a rigid link with 

\subsection{Rigid diagrams}
\label{rigid}
The diagrams obtained by projecting a glued knot are a natural subset of the possible diagrams that represent the isotopy class of that knot. Once can frame questions in terms of diagrams of knots in terms of these ``rigid'' diagrams. For instance, 

\begin{enumerate}
\item Is there a rigid diagram of a knot glued of minimal gluing degree with the number of crossings matching the crossing number of the knot?
\item Does every alternating knot have a rigid diagram which is alternating?
\item The maximum crossing number of a rigid diagram of a glued knot of degree~$2m$ is $c_{max}:=\frac{(2m-1)(2m-2)}{2}$. Is it possible to have a glued knot with minimal crossing number equal to $c_{max}$?
\end{enumerate}

\begin{example}
Figure~\ref{figureeight1} shows a rigid diagram of a figure eight knot which has the minimum number of crossings (4 crossings) and minimal gluing degree~6.  That its minimal gluing degree is 6 follows from the classification in section~\ref{classification}.
\end{example}

\begin{proposition}
  Let $c_m$ denote the number of crossings of a rigid diagram of a glued knot of degree~$2m$, then $c_m \cong m - 1 \textrm{ mod 2}$.
\end{proposition}

\begin{proof}
  If the knot is glued out of $m$ ellipses, then the number of glued points is equal to $m-1$. If we instead perturb rather than smoothed out the pre-glued knot, then these $m-1$ glued points along with the other crossings of the glued knot would form the crossings of the resulting perturbed rigid pure link. But the number of crossings of a rigid pure link is always even because each pair of ellipses must intersect in even number of points. Therefore, $m-1 + c_m$ is even, and the result follows.
\end{proof}

\begin{corollary}
The trefoil cannot have a minimal degree rigid diagram with minimal crossing number.
\end{corollary}
\begin{proof}
Since, as we will show in section~\ref{classification}, the minimal degree of a trefoil is 6, by the proposition, the rigid diagram must have even number of crossings so it cannot be the minimal crossing number, which for the trefoil is 3.
\end{proof}

%Question: Can we get a diagram of a glued knot with minimum number of ellipses needed to make that knot such that the number of crossings in that diagram is same as crossing number of that particular knot? So, what we are exactly asking is that if we always have to perform some Reidemeister Moves to get the diagram with the minuimum number of crossings when we use minimum number of ellipses to make that knot. The answer to this question is yes for some cases and for some it is not true. For example, the figure eight knot can be made with three ellipses glued at two places. Also, with the help of following diagram in Figure $\ref{figureeight1}$, we can show that we can have a diagram with 4 crossings after gluing these 3 ellipses. Whereas trefoil can also be made by gluing at least 3 ellipses but in this minimal degree no diagram can has 3 crossings as total number of crossings is the total sum of number of double points-the total sum of glued points, and total number of intersections in a projection consisting of ellipses is always even and as we are gluing 3 ellipses, total number of glued points i.e. $n-1$ is also even, therefore no projection after smoothening the glued points can yield 3 crossings.   
\begin{figure}[h]
\centering
\includegraphics[width=0.5\textwidth]{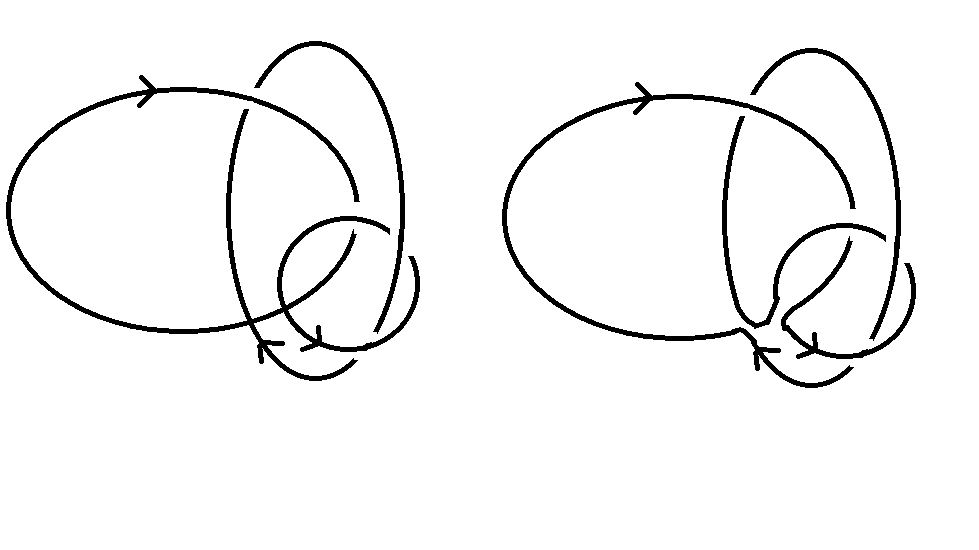}
\caption{Rigid diagram of a figure eight with the minimal number of crossings}
 \label{figureeight1}
\end{figure}

\blema Given an affine glued knot $K$ of degree 2n, where $n \geq 2$, with minimal crossing number $\frac{(2n-1)(2n-2)}{2}$ , we can always find a projection that will result in a non-alternating diagram.
\elema
\begin{proof}
The maximum crossings that a diagram of glued knot of degree $2n$ can have is $\frac{(2n-1)(2n-2)}{2}$. From the ellipses that are glued to form this knot, consider a pair of glued ellipses $E_m$ and $E_n$. By part (i) of lemma~\ref{ellipseintersect}, at least one of them is not being intersected by the other from the interior. Without loss of generality, assume that $E_m$'s interior is not intersected by $E_n$.  Project the knot from a point on the plane of $E_m$ so that $E_m$ is projected to a line segment and is therefore not a valid diagram.  By a sufficiently small perturbation, the projection will be a valid diagram  and $E_m$ will be projected  as a very thin ellipse which (before gluing) divides the plane into two components, such that the bounded component has no crossings. To obtain the maximum $\frac{(2n-1)(2n-2)}{2}$ number of crossings, $E_n $ would have to make 3 crossings with it (since one would be glued). in this projection, no strand of $E_n$ can make an alternating pattern of over-under with this thin ellipse because that would require it to link with $E_m$, however since $E_n$ does not intersect the interior of $E_m$, the linking number is 0. Therefore, this particular projection is non alternating. 
\end{proof}
\bcoro
There can be no prime alternating glued knot of degree $2n$, where $n\geq2$, that has minimal crossing number $\frac{(2n-1)(2n-2)}{2}$.
\end{coro}
\begin{proof}
As for each prime link $L$, the span of its Jones Polynomial is strictly less than the crossings of any non-alternating diagram $L'$ (cf. \cite{murasugi}), so if there exists a glued knot which is prime and has minimal crossing number $\frac{(2n-1)(2n-2)}{2}$, by the previous lemma, its span of the Jones Polynomial is strictly less than $\frac{(2n-1)(2n-2)}{2}$ and therefore there can not exist a prime alternating knot with the minimal crossing number.  
\end{proof}

At least for knots up to degree~6, for which the classification is worked out in section~\ref{classification}, the minimal crossing numbers are all much smaller than $C_{max}$. We do not yet know how much this bound can be reduced for general glued knots. On the other hand, for $m \geq 3$, we can prove the existence of glued knots of degree~$2m$ with minimal crossing number $2m-2$:

\begin{figure}[h]
\centering
\includegraphics[width=0.5\textwidth]{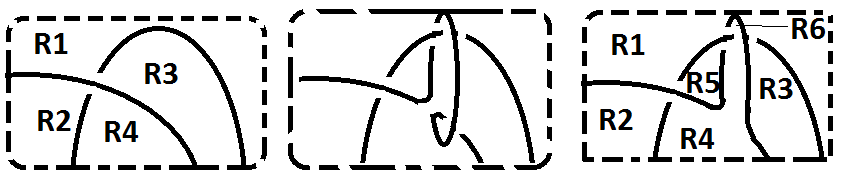}
\caption{Reduced diagrams}
 \label{compare}
\end{figure}

\begin{figure}[h]
\centering
\includegraphics[width=0.5\textwidth]{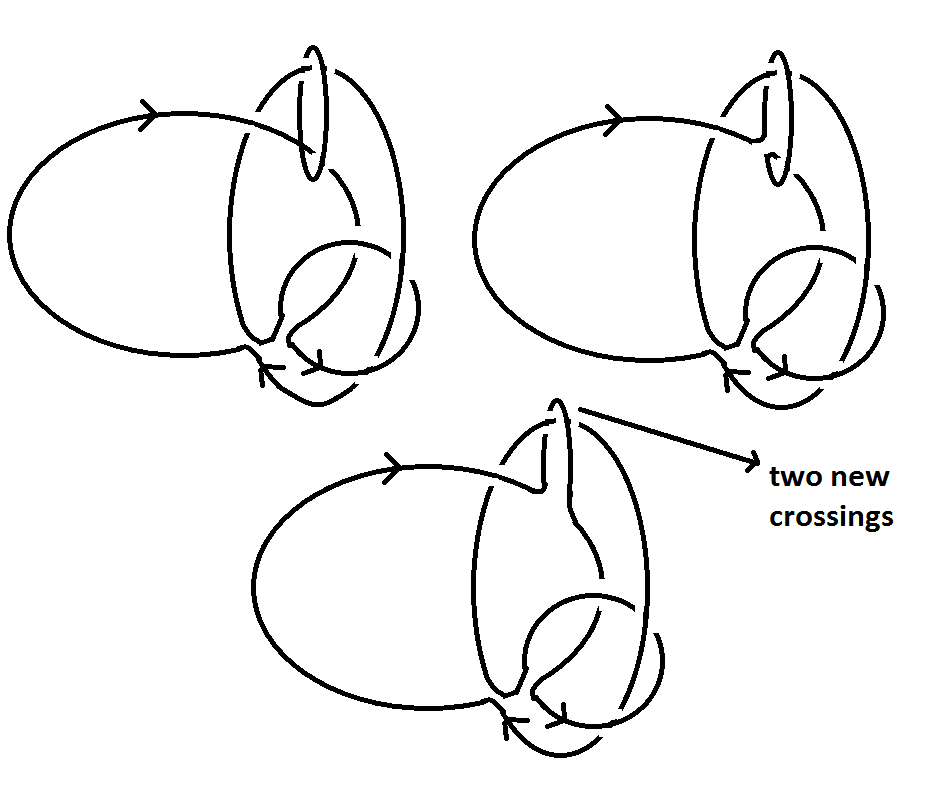}
\caption{Gluing one more ellipse to increase minimal crossing number by 2}
 \label{increase}
\end{figure}

\begin{theorem}
There exists a glued knot of degree $2n$ with crossing number $2n-2$ for $n\geq3$.
\end{theorem}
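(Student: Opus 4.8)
The plan is to exhibit, for each $n \geq 3$, an explicit family of glued knots $K_n$ of degree $2n$ together with a diagram realizing exactly $2n-2$ crossings, and then to argue that the crossing number cannot be lowered below $2n-2$. The construction should be inductive, paralleling the torus-knot construction of the maximal-writhe theorem: I would start with a base case (say $n=3$) in which three ellipses are glued to form a knot admitting a $4$-crossing reduced diagram, as suggested by Figure~\ref{compare}, and then glue one additional ellipse at each inductive step so that the new ellipse contributes precisely $2$ extra crossings to the diagram, as indicated in Figure~\ref{increase}. Concretely, each newly glued ellipse should be positioned so that it meets the existing configuration in exactly one crossing beyond its glued point, giving a net increase of $2$ crossings per ellipse and hence a diagram with $2n-2$ crossings for the knot glued out of $n$ ellipses.

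The first half of the argument, producing a diagram with $2n-2$ crossings, is essentially a matter of verifying that the gluing configuration in Figures~\ref{compare} and~\ref{increase} behaves as claimed; this reduces to a finite, local check at the base case and a single inductive gluing step. The genuinely harder half is the lower bound: showing that $K_n$ cannot be represented with fewer than $2n-2$ crossings. For this I would want the diagrams to be reduced and alternating, so that the crossing number is pinned down by the span of the Jones polynomial via the Kauffman--Murasugi--Thistlethwaite theorem. If the constructed diagrams are reduced alternating diagrams with $2n-2$ crossings, then the crossing number of $K_n$ equals exactly $2n-2$, and no lower representation is possible.

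Thus the key steps, in order, are: (i) describe the preglued configuration of $n$ ellipses and the inductive rule for adding an ellipse that adds two crossings; (ii) confirm the base case diagram has $4$ crossings and is a reduced alternating diagram of a genuine (nontrivial) knot; (iii) check that the inductive step preserves both the reduced and alternating properties while increasing the crossing count by exactly $2$; and (iv) invoke the theorem that a reduced alternating diagram realizes the crossing number to conclude minimality. The main obstacle I anticipate is step~(iii): ensuring that each freshly glued ellipse can be placed so as to keep the whole diagram simultaneously \emph{reduced} (no nugatory crossings) and \emph{alternating}, since an arbitrary placement could easily destroy the alternating pattern or introduce a removable crossing. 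Verifying alternation for rigid ellipse diagrams is subtle, as the over/under pattern is constrained by the rigidity of the ellipses rather than freely choosable; I would address this by exhibiting the explicit placements in the figures and checking the over/under data locally at the two new crossings. If alternation cannot be guaranteed in general, a fallback is to compute the Jones polynomial span of $K_n$ directly (for instance via the skein relations developed in section~\ref{skeinrigid}) to establish the lower bound independently of the alternating hypothesis.
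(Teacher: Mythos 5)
Your proposal follows essentially the same route as the paper's proof: an induction starting from the $4$-crossing reduced alternating rigid diagram of the figure eight knot at $n=3$, gluing one ellipse per step (as in Figures~\ref{compare} and~\ref{increase}) so that the diagram stays reduced and alternating with two more crossings, and then invoking the Kauffman--Murasugi--Thistlethwaite theorem to pin the crossing number at $2n-2$. The difficulty you flag in step~(iii) is exactly the point the paper handles by noting that the glued diagram becomes alternating after one Reidemeister~I move and stays reduced because the regions around the crossings of the old reduced diagram were already distinct.
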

\begin{proof}
We will prove this by showing that for each  $n\geq 3$, there exists a glued knot of degree $2n$ which has a reduced alternating diagram with $2n-2$ number of crossings. As crossings in reduced alternating diagrams correspond to the crossing number of that particular knot, this will imply the result. For $n=3$,  we have figure eight knot as shown in Figure~$\ref{figureeight1}$. Note that the diagram shown is reduced alternating one. Now, suppose that there exist a knot $K_{2n}$ with degree $2n$ and it has a reduced alternating diagram with $2n-2$ crossings. Pick any one of its crossings and glue a new ellipse so that the new diagram changes only in a small neighbourhood around this crossing as shown in figure \ref{compare}. We could always choose the orientation of the ellipse such that it glues in this particular manner. Also, it is glued in such a way that the new diagram remains alternating after performing the First Reidemeister Move once. Moreover as the previous diagram was also reduced, this implies the regions surrounding each crossings in new diagram are all different and therefore the new diagram remains reduced. Figure \ref{increase} shows the procedure applied on figure eight knot.
\end{proof}
One can also observe in the construction of knots with the desired number of 3-colours that we get an increment of three crossings in the minimal crossing number by adding one more ellipse.

\section{Glued knots from ellipses}
%\begin{dfn}(defined previously)
 %We call a curve a preglued curve if it is the union of two oriented real rational knots that intersect in a single point and such that the tangents at those points are linearly independent.
%\end{dfn}
%\begin{dfn}
We get our glued knots or links after smoothing the intersection points in accordance with the orientations, therefore glued knots and links are determined up to oriented preglued curves.
%\end{dfn} 
Isotopic preglued curves made by gluing ellipses may give non isotopic glued knots. Two preglued curves on the left in Figure~\ref{isotopic_preglued}, are isotopic to each other, but after smoothing the gluing spot, they give unknot and trefoil (Figure \ref{smoothened}), which are not isotopic to each other. This is because the isotopies involving neighbourhoods of glued spots can be performed in preglued curves but are no longer valid in glued knots as after smoothing the topology in those small neighbourhoods of these two objects becomes different. This is why it is possible to 'switch crossings' in preglued curves as shown in the figure, but not in glued versions. Therefore, we will find all isotopic preglued curves up to non disturbed neighbourhoods around glued points to get all the glued knots. 
\begin{figure}[h]
\centering
\includegraphics[width=0.5\textwidth]{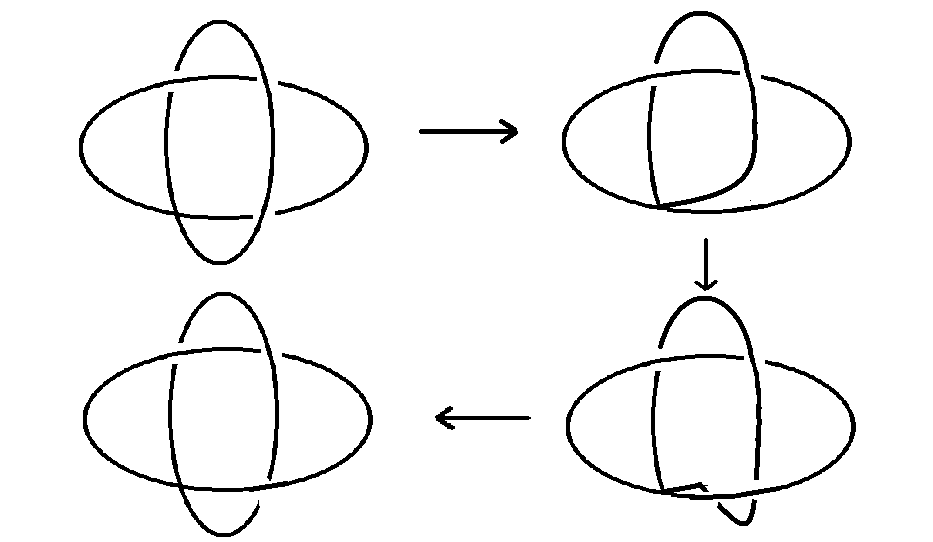}\hfill
\caption{Isotopic preglued curves}
 \label{isotopic_preglued}
\end{figure}

\begin{figure}[h]
\centering
\includegraphics[width=0.5\textwidth]{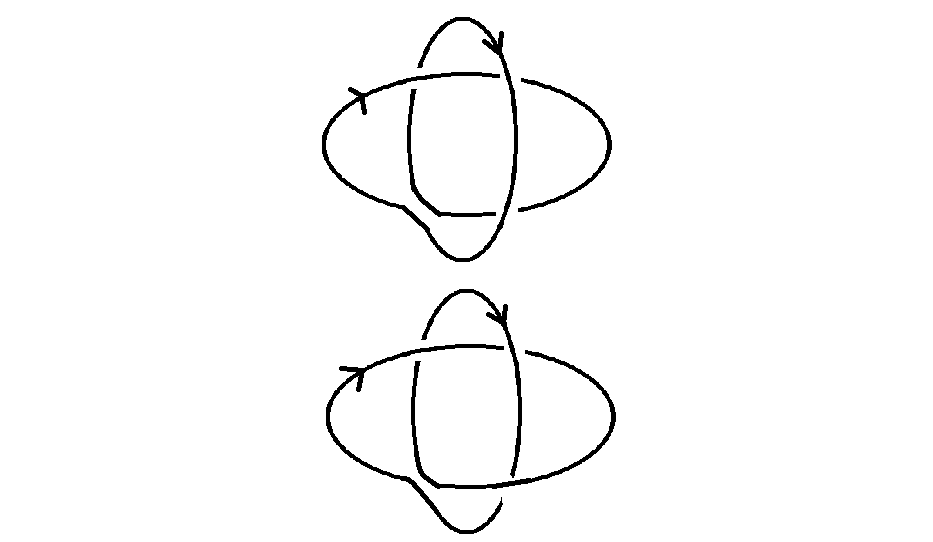}\hfill
\caption{Knots after smoothing preglued curves in Figure \ref{isotopic_preglued}.}
\label{smoothened}
\end{figure}

The gluing information can be presented in the form of a tree where each ellipse can be thought of as a vertex and two vertices share an edge if corresponding ellipses are glued. Trees for some corresponding preglued curves are shown below:%-\\\\\\
\begin{figure}[h]
\centering
\includegraphics[width=0.5\textwidth]{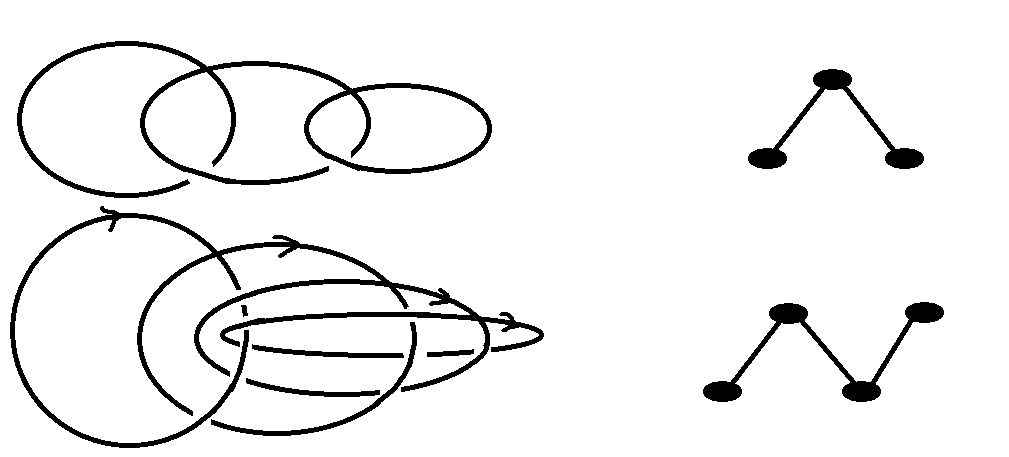}
\caption{Trees representing gluing information}
\end{figure}

%Classification of all the preglued curves with a particular number of ellipse can also be done by finding curves corresponding to these trees separately. There can be more than one non isotopic preglued curves (up to non disturbed glued spots) from the class of same tree. See Figure \ref{nonisotopic_sametree}. Its easy to check these curves give different glued knots but lie under same tree. It can be observed that the in one on them there is no linking between non-glued ellipses and in the other non-glued pair is linked. But embedding the linking information in the tree is also not enough to get unique elements. As the following two curves in Figure \ref{same_linking} have same tree and pair of non-glued ellipses are not linked but the knots they give after smoothing are figure eight knots and unknots respectively.\\
Classification of all the preglued curves with a particular number of ellipse can also be done by finding curves corresponding to these trees separately. There can be more than one non isotopic preglued curves(up to non disturbed glued spots) from the class of same tree. From Figure \ref{nonisotopic_sametree}, it is easy to check these preglued curves give different glued knots but lie under same tree. It can be observed that in one on them there is no linking between non-glued ellipses and in the other non-glued pair is linked. But embedding the linking information in the tree is also not enough to get unique elements. As the following two curves in Figure \ref{same_linking} have same tree and pair of non-glued ellipses are not linked but the knots they give after smoothing are figure eight knots and unknots respectively.

\begin{figure}[h]
\centering
\includegraphics[width=0.5\textwidth]{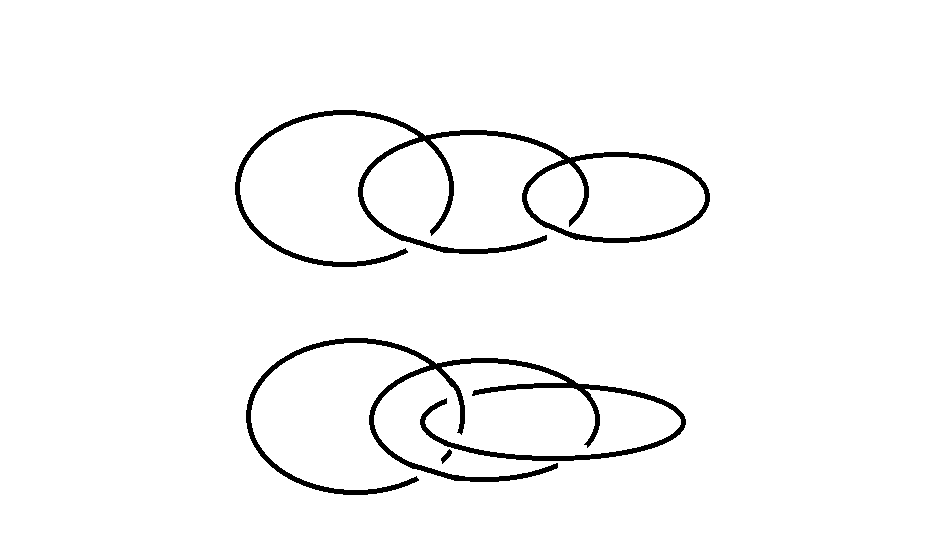}
\caption{Regularly non isotopic preglued curves}
\label{nonisotopic_sametree}
\end{figure}

\begin{figure}[h]
\centering
\includegraphics[width=0.7\textwidth]{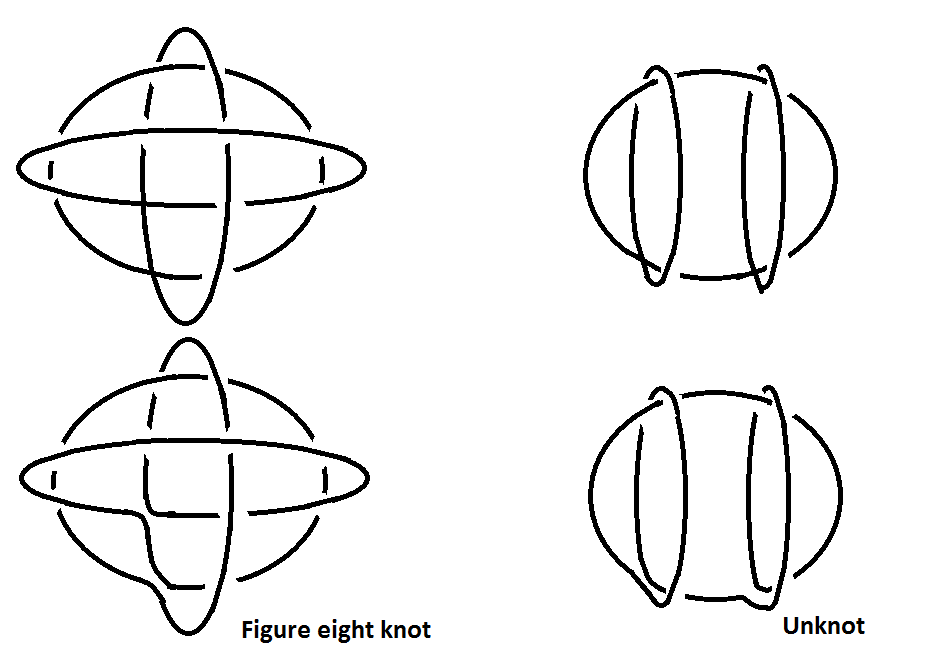}
\caption{regularly non isotopic preglued curves with same pairwise linking between ellipses}
 \label{same_linking}
\end{figure}

There is a difference between them too. In the curve which gives unknot, the pair of non glued ellipses are not passing through interiors of each other, whereas in the one which gives non trivial knot, although non-glued pair is not linked here also, but one of the ellipses from that pair passes through interior of other. The change is due to the fact that in first case there can be nothing which can provide a hindrance in pushing two ellipses apart from each other, in the other one, a strand can pass through, which can block a Reidemeister 2 move (see~Figure \ref{move}) on the diagram level. This is the same situation as in Borromean Rings. Unlinked components, if we see them pairwise, are forced to remain tangled by third component. Although this might not give complete criterion i.e. unique elements corresponding to a tree along with these information embedded during classification, this will help us in making cases when number of ellipses are low as we have done for the case of three ellipses in section $\ref{classification}$.

\begin{figure}[h]
\centering
\includegraphics[width=0.5\textwidth]{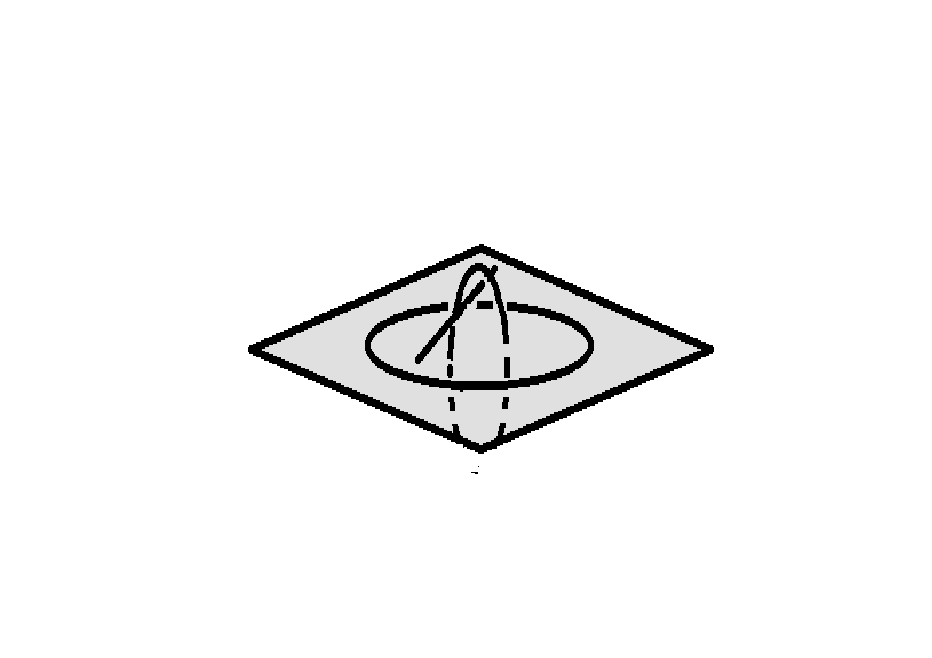}
\caption{blocked Reidemeister 2 move}
 \label{move}
\end{figure}
 %Now, two isotopic preglued curves will give us same glued knots if we take care that isotopy do not disturb some small neighbourhood around glued point. Perturbing the preglued curves from the intersection points will give us a set of $2^n$ different classical links unique upto isotopy, $n$ is the number of total intersections. We can define an invariant of preglued curves in terms of invariants of these classical links. The set of values of a particular invariant corresponding to these links will serve as an invariant of the preglued curve.

\section{Classification of glued Knots up to degree~6.}
\label{classification}
We show the existence of unknot, trefoil and figure eight in the following figure before showing that these are the only ones that we can make with number of ellipses less than or equal to 3.
\begin{figure}[h]
\centering
\includegraphics[width=0.5\textwidth]{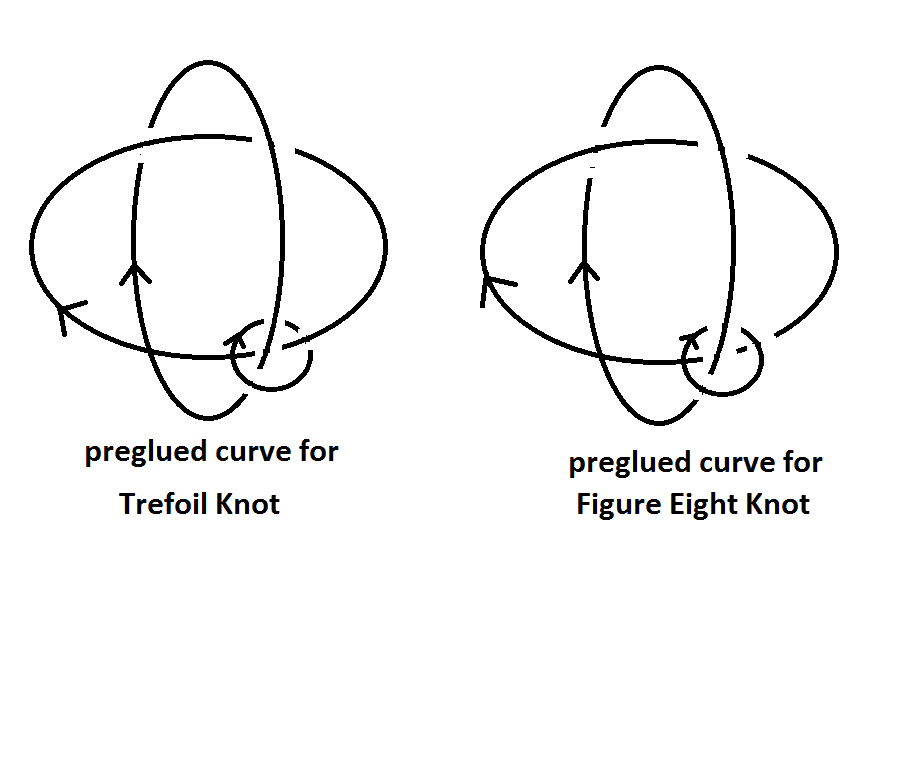}\hfill
\caption{Trefoil and Figure Eight}
 \label{trff8}
\end{figure}

\begin{lema}
Degree 2 glued curves are isotopic to the unknot.
\end{lema}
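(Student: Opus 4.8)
The plan is to simply unwind the definitions and observe that the statement is the base case of the classification, where essentially nothing happens. A glued knot of degree~$2m$ is by definition constructed by gluing $m$ ellipses, so degree~$2$ forces $m=1$. Since gluing $m$ ellipses along a tree introduces exactly $m-1$ gluing points, for $m=1$ there are no gluing points at all: the associated graph from Definition~\ref{preglued} is the single-vertex tree, and the ``smoothing out all the glued points'' operation is vacuous. Hence a degree~$2$ glued curve is literally a single ellipse, with no smoothing performed.

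The next step is to recognize that a single ellipse is a smoothly embedded simple closed curve lying in an affine plane $\Pi \subset \mathbb{R}^3$. I would then note that, being a closed convex curve in $\Pi$, it bounds an embedded $2$-disk inside $\Pi$ itself (the region it encloses). A simple closed curve in $\mathbb{R}^3$ that bounds an embedded disk is by definition the unknot, so the isotopy class is immediate; alternatively one can write down an explicit ambient isotopy taking the ellipse $\{(a\cos t, b\sin t, 0)\}$ to the round unit circle by linearly interpolating the semi-axes, which stays an embedding throughout.

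I do not expect any genuine obstacle here, since there is no linking or nontrivial gluing data to analyze when $m=1$; the entire content is checking that the gluing construction degenerates correctly in the single-ellipse case. The one point worth stating carefully is that the tree condition is satisfied trivially (a single vertex \emph{is} a tree), so that a lone ellipse legitimately counts as a degree~$2$ preglued knot and its smoothing is the ellipse itself. This lemma then serves as the initial step from which the degree~$4$ and degree~$6$ classifications in this section build outward.
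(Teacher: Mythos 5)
Your proof is correct and follows essentially the same route as the paper, which simply observes that a degree~$2$ glued curve is a single ellipse and therefore bounds a disc. Your additional remarks on the vacuity of the gluing data for $m=1$ and the explicit isotopy to the round circle are fine elaborations but not substantively different.
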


\begin{proof}
It is a single ellipse and therefore bounds a disc.
\end{proof}
 In fact, any two ellipses $E_1$ and $E_2$ are rigidly isotopic to each other. Translate the centre of $E_1$ to centre of $E_2$. Then rotate $E_1$ to align major and minor axis to that of $E_2$ and after that adjusting the length of these axes will take one ellipse to the other.

%Any ellipse can be transformed to some other ellipse by first rotating the plane of one ellipse to match it with the plane of other ellipse and then isotope it to other by expanding or shrinking it in plane. Now we will show that we can standardize any two glued ellipses into a particular form.

Any ellipse in space lies in a plane. It divides the plane into two components. We call the bounded component the \textbf{interior of the ellipse} and the unbounded component the \textbf{exterior of the ellipse}.
\begin{lema}
  \label{ellipseintersect}
\begin{enumerate}[(i)]
\item Let an ellipse $E_1$ is glued to another ellipse $E_2$. Either they do not intersect the interiors of each other or if $E_1$ intersects the interior of $E_2$, $E_2$ will not intersect the interior of $E_1$.%\\
\item Let $E_1$ and $E_2$ be two ellipses which are not glued to each other, either they do not intersect the interiors of each other or if $E_1$ intersects the interior of $E_2$ in two points, then $E_2$ does not intersect the interior of $E_1$. Also, if $E_1$ intersects the plane of $E_2$ in one interior point and one exterior point of $E_2$ , $E_2$ also intersects the plane of $E_1$ in one interior point and one exterior point of $E_1$.
\end{enumerate} 
\end{lema}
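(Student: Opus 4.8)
The plan is to reduce the entire statement to the elementary combinatorics of a few points lying on a single line. Let $\Pi_1,\Pi_2$ be the planes containing $E_1,E_2$. If the planes are parallel then neither ellipse meets the other's plane, so neither can meet the other's interior and both parts hold vacuously; otherwise set $L=\Pi_1\cap\Pi_2$, a line. The point of the reduction is that any point of $E_1$ lying in $\Pi_2$ (in particular any point of $E_1$ lying in the interior of $E_2$, and any point of $E_1\cap E_2$) must lie on $L$, and symmetrically for $E_2$; so every intersection the lemma refers to takes place along $L$. Since a line meets an ellipse in at most two points and meets its interior in the open chord bounded by those points, I would record for each $i$ the set $E_i\cap L$ (at most two points) together with the open segment $\mathrm{int}(E_i)\cap L$ bounded by them, which is nonempty exactly when $L$ is secant to $E_i$. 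After this set-up the lemma becomes a statement about the relative position of two segments on $L$.

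For part (i), gluing gives a single common point $p\in E_1\cap E_2\subset L$. If $L$ is tangent to $E_1$ at $p$, then $\mathrm{int}(E_1)\cap L=\emptyset$, so $E_2$ cannot meet $\mathrm{int}(E_1)$ and the conclusion is immediate; the symmetric remark handles $E_2$. In the remaining case $L$ is secant to both, meeting them at $\{p,b_1\}$ and $\{p,b_2\}$ with interiors the open segments $(p,b_1)$ and $(p,b_2)$. Now ``$E_2$ meets $\mathrm{int}(E_1)$'' forces $b_2\in(p,b_1)$ while ``$E_1$ meets $\mathrm{int}(E_2)$'' forces $b_1\in(p,b_2)$; placing a coordinate on $L$ with $p=0$, these two betweenness relations contradict each other, so at most one of them can hold.

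For part (ii) the two ellipses are not glued, and I would treat separately the harmless case where they already meet in two points (then $E_1\cap L$ and $E_2\cap L$ share both endpoints, so neither ellipse meets the other's interior and there is nothing to prove) from the case of four distinct points giving chords $I_1,I_2$. For (a), ``$E_1$ meets $\mathrm{int}(E_2)$ in two points'' means both endpoints of $I_1$ lie in $I_2$, i.e.\ $\overline{I_1}\subset I_2$; then the endpoints of $I_2$ lie outside $\overline{I_1}$, so $E_2$ cannot meet $\mathrm{int}(E_1)$. For (b), the hypothesis places one point of $E_1\cap L$ inside $E_2$ and the other outside; since a point interior to $E_2$ forces $L$ to be secant to $E_2$, both points of $E_2\cap L$ exist, and ordering the four points on $L$ shows this ``one-in, one-out'' relation is symmetric, yielding exactly one point of $E_2\cap L$ in the interior and one in the exterior of $E_1$.

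The routine content is an elementary case analysis of two segments on a line; the only real care is in the dictionary-building step, namely correctly translating each phrase (``intersects the interior'', ``in two points'', ``one interior and one exterior point'') into a betweenness relation on $L$ and disposing of the degenerate configurations (parallel planes, tangency of $L$ to an ellipse, and the coincidence of intersection points when the two ellipses already meet in two points). I expect the bookkeeping of these degenerate cases, rather than any single inequality, to be the main place where errors could creep in.
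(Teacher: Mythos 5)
Your proposal is correct and follows essentially the same route as the paper's proof: both reduce everything to the common line $L=\Pi_1\cap\Pi_2$ (the paper constructs it as the line through the relevant intersection points and notes it ``contains all the points that can possibly lie in the interior''), and then settle each case by ordering points on $L$ and using that the interior of an ellipse meets $L$ in the open chord between its two intersection points. Your treatment is somewhat more systematic about degenerate configurations (parallel planes, tangency) which the paper passes over silently, but the underlying argument is the same.
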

\begin{proof}
\begin{enumerate}[(i)]
\item If none of them pass through each other, there is nothing to prove. So, take the case where $E_1$ intersects the interior of $E_2$.
\begin{figure}[h]
\centering
\includegraphics[width=0.7\textwidth]{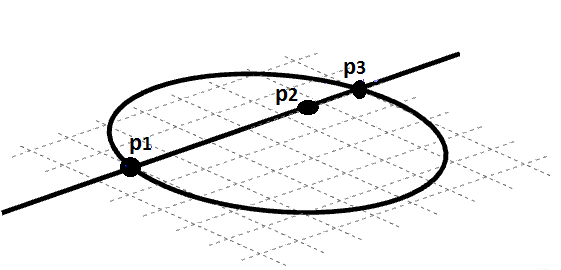}\hfill
\caption{$E_1$ intersecting plane of $E_2$ at $p_1$ and $p_2$. Figure is drawn using software GeoGebra }
 \label{ellipse_lemma(i)}
\end{figure}

Take the plane in which $E_2$ lies. $E_1$ will intersect with this plane in two points. One of them is the glued point and the other is somewhere in the interior of $E_2$. We will call these points as $p_1$ and $p_2$ respectively as shown in Figure \ref{ellipse_lemma(i)}. Draw a line passing through these two points such that it cuts $E_2$ in $p_3$. This is the common line of the planes of two ellipses and properly contains all the points that can possibly lie in the interior of $E_1$ and is also a point of $E_2$. Due to convexity of interior of $E_2$, $p_2$ lies between $p_1$ and $p_3$.  Now, $p_3$ can not be an interior point of $E_1$ because if it was, line segment from $p_1$ to $p_3$ should not contain any other point of $E_1$ due to convexity of interior of $E_1$. But, we know $p_2$ lies between $p_1$ and $p_3$. Therefore, $E_2$ is not intersecting interior of $E_1$.
\\\\\\
\item Suppose, without loss of generality, $E_1$ intersects the interior of $E_2$ in two points. We mark out these points of intersection as $p_1$ and $p_2$ and draw a line passing through them. See Figure \ref{ellipse_lemma(ii)}.\\
\begin{figure}[h]
\centering
\includegraphics[width=0.9\textwidth]{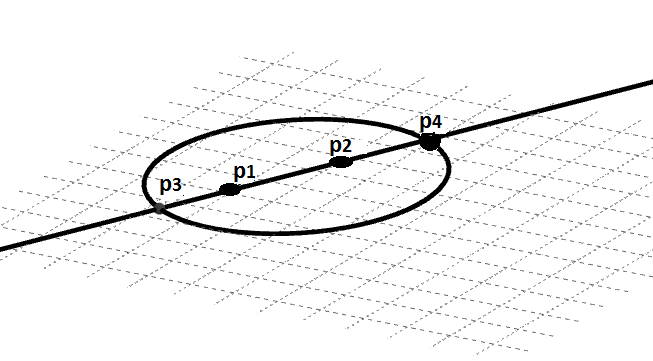}\hfill
\caption{$E_1$ intersecting interior of $E_2$ at $p_1$ and $p_2$. Figure is drawn using software GeoGebra.}
 \label{ellipse_lemma(ii)}
\end{figure}

This line will intersect $E_1$ in two points $p_3$ and $p_4$. and due to convexity of $E_1$, $p_1$ and $p_2$ lies between these $p_3$ and $p_4$. Let, $p_3$ be the one close to $p_1$ and $p_4$ as the one close to $p_2$.  Now,  if we see line segments joining $p_1$ to $p_4$, $p_2$  lies on it. Therefore, due to convexity of interior of $E_2$, $p_4$ can not lie in interior of $E_2$ as while joining a point on an ellipse with its interior point, no other point of that ellipse should lie in between. Similarly, $p_3$ can not be an interior point of $E_2$. Now, because this line has all common points shared by the planes of two ellipses, and also $p_3$ and $p_4$ are not interior points of $E_2$, interior of $E_2$ is not intersected by $E_1$.\\
In the last case, where $E_1$ intersects the plane of $E_2$ in two points, where one point say $p_1$ is interior point of $E_2$ and the other point say $p_2$ is the exterior point. We draw a line passing through these two points. It will cut the $E_2$ in two points say $p_3$ and $p_4$. Now, being an interior point of $E_2$, $p_1$ will lie between $p_3$ and $p_4$ and $p_2$, being an exterior point, will not lie between them. Therefore, if we join $p_1$ and $p_2$, one of $p_3$ and $p_4$ lie in between them which implies that point cuts the interior of $E_1$. Take it as $p_3$ without loss of generality.  Also, $p_1$ and $p_2$ will be on the same side of the component that we get after removing $p_4$. Therefore, $p_4$ can not be an interior point of $E_1$, as at least one of the line segments joining $p_1$ to $p_4$ or $p_2$ to $p_4$ will cut $p_2$ and $p_1$ respectively.
\end{enumerate}    
\end{proof}
\blema
%\vspace{0.1cm}

\begin{enumerate}[(i)]
\item Let $c$ be a chord of an ellipse $E$. If $c_1$ and $c_2$ be any two chords that are parallel to $c$ and on different sides of $c$, then $E$ is isotopic to the boundary of the region of the ellipse enclosed between $c_1$ and $c_2$.
\item Let $p$ be a point on an ellipse $E$. Then, $E$ can be isotoped from inside to a simple closed curve which lies completely inside an $\epsilon$- neighbourhood of the point $p$ and $ 0 < \epsilon < M$, where $M$ is the maximum distance of a point of $E$ from $p$.
\end{enumerate}
\elema

\begin{proof}
\begin{enumerate}[(i)]
\item The three parallel chords $c_1$, $c_2$ and $c$ are such that $c$ lies in between the other two as in Figure \ref{thinellipse}. It follows using Schoenflies Theorem that there exists an isotopy which is fixed outside the closure of region bounded by arc $\stackrel{\LARGE\frown}{MCN}$ and $c_1$ and takes arc $\stackrel{\LARGE\frown}{MCN}$ to $c_1$. Similarly, $\stackrel{\LARGE\frown}{PDQ}$ can be taken to $c_2$ to get the desired closed figure.\\
\begin{figure}[h]
\centering
\includegraphics[width=0.7\textwidth]{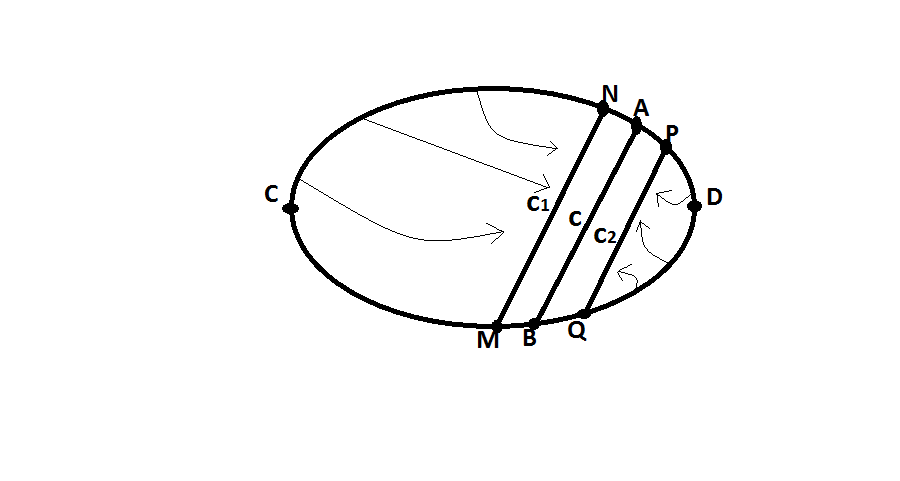}\hfill
\caption{Arcs of the ellipses isotoped to chords.}
 \label{thinellipse}
\end{figure}

\item One can adjust the width of the curve by using part (i) of the lemma. Then the length of the curve can be reduced so that it lies completely inside $\epsilon$ neighbourhood of $p$ for the given $\epsilon$ by moving the curve $ANMB$ as shown in Figure $\ref{scalingellipse1}$ towards $AB$.  
   % Let's see closely what we did during shrinking in the previous case. We do this by first making a chord passing through two points of the ellipse, one of which is the gluing point say $p$ as shown in Figure \ref{isotopy_lemma} . As we don't want to disturb the neighbourhood of the glued point, we take an $\epsilon$ width fringe around this chord, whose two opposites side are parts of ellipse and shrink the ellipse from both sides to the boundary of this fringe as shown in Figure \ref{isotopy_lemma}. This preserves the condition that the curve replacing previous ellipse can still intersect the plane of the other ellipse in at most two points. It should also be noted that if two ellipses are glued, shrinking process along a chord whose one end point is the glued point does not change their configuration i.e. if one is intersecting the other from interior, the curves will also do that at each step of shrinking. This is because one point of intersection of them with the other's plane is the glued point, and if the other point of intersection of one ellipse on the plane of the other ellipse is from inside, shrinking can't bring it outside and similarly if it intersects outside.\\
\end{enumerate}
\end{proof}
\begin{figure}[h]
\centering
\includegraphics[width=0.5\textwidth]{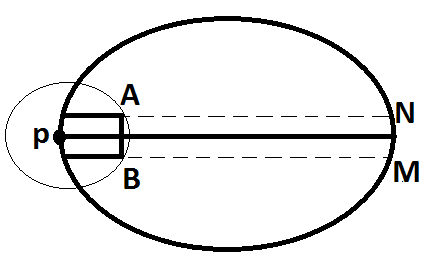}
\caption{Isotoping the ellipse in a small neighbourhood around one of its points. }
 \label{scalingellipse1}
\end{figure}

Now, for the case of two glued ellipses, by Lemma 4.2(i), interior of one of them will not be intersected by other and therefore by Lemma 4.3(ii) it can be isotoped to an ellipse that is in a small enough neighbourhood of the glued point so that in the projection it looks like a small kink in other ellipse after gluing. If we take a chord in this ellipse which does not intersect the other ellipse, infact we could istope it to a diagram with two crossings which implies we could get only unknot with two ellipses. 
\begin{figure}[h]
\centering
\includegraphics[width=0.7\textwidth]{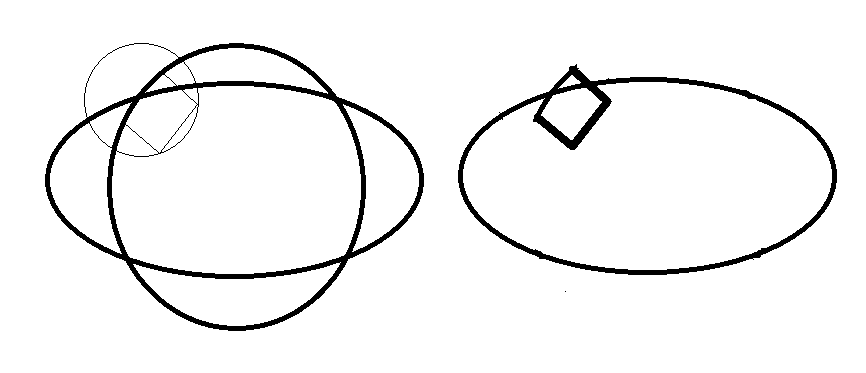}\hfill
\caption{We could get only unknot with two ellipses}
 \label{thinellipse}
\end{figure}
% how much should we write to show kink can be resolved within small neighbourhood.
%After gluing the ellipse which has been shrunk, will look like a miniscule kink in the bigger ellipse. So any two ellipses arranged in any manner will always give us unknots.
%\\\\\\
%We can show the same thing using only diagrams. With one ellipse, the only possiblity for crossing number is 0, with two ellipses crossing number is maximum 1, i.e. again only unknot is possible. This is because all the possiblities except $3^{rd}$ and $6^{th}$ as shown in Figure 26 will lead to unknot only and ellipses are not able to arrange in these two configurations because to have a diagram like these the second ellipse has to intersect plane of the first ellipse at least in three points.\\
 
Let us move to the case of 3 ellipses. %Using diagrams approach and  becomes a bit hard due to large number of cases have to be considered. We will try to use above observations to classify preglued curves upto regular isotopic.
 Here only 1 tree is possible. So, we will name the ellipses according to their position in tree, that is the ellipse which is in middle of tree (glued to both other ellipses) as middle ellipse or $E_2$ and the other two as left ellipse or $E_1$ and right ellipse or $E_3$. In other words, $E_1$ and $E_2$ are glued, $E_2$ and $E_3$ are glued, and, $E_1$ and $E_3$ are not glued to each other. Also, let $P_i$ be the plane in which $E_i$ lies. The following table show us the sub-parts in which we can divide our cases on the basis of how ellipses are interacting with interiors of others pairwise.

\begin{center}
\begin{tabular}{ | p{0.1\linewidth} | p{0.25\linewidth}| p{0.25\linewidth} | p{0.25\linewidth} |  } 
\hline
S.No.& Either the first or second ellipse intersects the interior of the other &  Either the second or third ellipse intersects the  interior of the other & Either the first or the third ellipse intersects the interior of other.\\
\hline
(i) & No & No & No \\
\hline
(ii) & No & Yes & No\\
\hline
(iii)& Yes & No & No\\
\hline
(iv) & Yes & Yes &  No\\
\hline
(v) & No & No & Yes\\
\hline
(vi) & Yes & No & Yes\\
\hline
(vii) & No & Yes & Yes\\
\hline
(viii) & Yes & Yes & Yes\\
\hline
\end{tabular}
\end{center}
Case (ii) is equivalent to Case (iii) and similarly (vi) is equivalent to Case (vii) as one can switch between them just by switching labels $E_1$ and $E_3$.%\\
%$\noindent{\bf Case (i), (ii) and (iii)\,}$

\case{ (i), (ii), and (iii)}
In the first three cases, $E_1$ and $E_3$ do not pass through interior of each other. Along with that, interior of at least one of them is not intersected by the middle ellipse also. So, the interior(s) of either $E_1$ (case 2) or $E_2$ (case 3) or both (case 1) is (are) completely non intersected by other strands. So, by Lemma 4.3(i), that particular ellipse can be isotoped via an isotopy that is supported only in the interior so that it lies in a small enough neighbourhood of the crossing arising from the glued point. Now, topologically that small loop can be resolved and now, after performing the same procedure as done with two glued ellipses, we get only unknots.

%$\noindent{\bf Case (iv)\,}$
\case{(iv)}
In this case, $E_1$ and $E_3$ do not pass through interiors of each other. Now, either of these types of sub-cases are possible:

\paragraph{Sub-case of Type 1} If $E_2$ has been intersected by strands of some other ellipses, by Lemma 4.3(i), the interiors of those ellipses are not intersected by $E_3$. So, again, we can isotope that ellipse from inside to get unknots eventually after gluing. Recall that we have not included a small neighbourhood during these contractions from inside, so all are isotopic.%\\ 

%\paragraph{Sub-case of Type 1} If $E_2$ has been intersected by strands of some other ellipses, by Lemma 4.3(i), the interiors of those ellipses are not intersected by $E_3$. So, again, we can isotope that ellipse from inside to get unknots eventually after gluing. Keep in mind, we haven't include a small neighbourhood during these contractions from inside, so all are isotopic.\\ 

\paragraph{Sub-case of Type 2} The only sub-part left is when both $E_1$ and $E_3$ are being intersected from the interior by $E_2$ simultaneously. Mark the glued points $g_1$ and $g_3$ indicating points of intersection of $E_1$, $E_2$ and $E_2$, $E_3$ respectively and two more points where $E_2$ intersects the interior of $E_1$ and $E_3$ as $p_1$ and $p_3$ respectively. Now, join $g_1$ and $p_1$ by a line segment. Similarly join $g_3$ with $p_3$. Extend these line segments to cut points say r and s of $E_2$ and $E_3$ respectively as shown in Figure \ref {case(iv)}. Name these extended line segments as $m$ and $n$. We will always get these r and s when we extend previous line segments because line segments which are extended are between points on the ellipse and interior points. Now, project ellipses along with these line segments. By using Lemma 4.3(ii), we replace $E_1$ and $E_3$ with closed curves whose projections lie in the small neighbourhood of projections these two line segments $m$ and $n$. These line segments can't intersect each other due to following result.
\begin{figure}[h]
\centering
\includegraphics[width=0.5\textwidth]{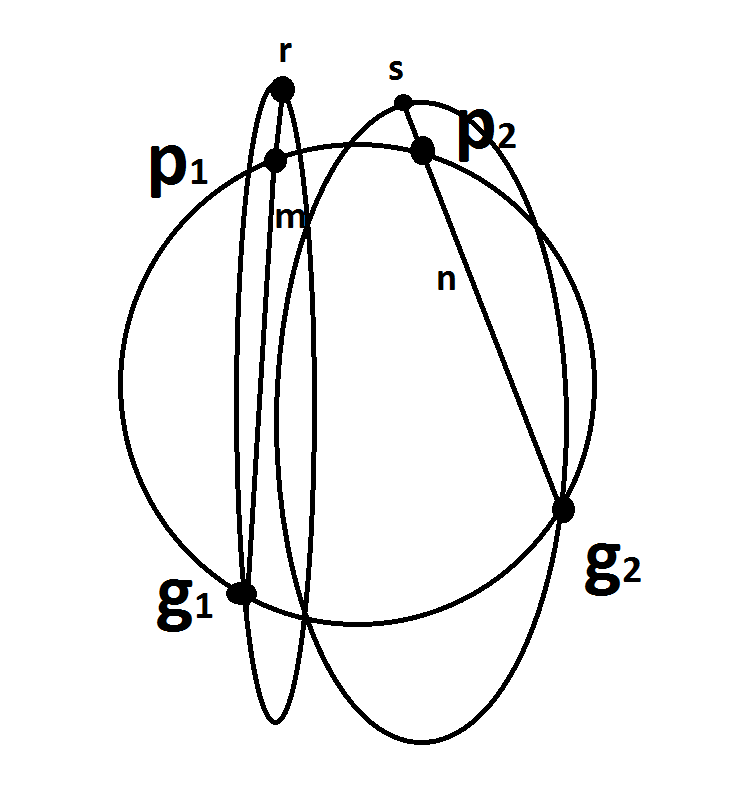}
\caption{Sub-case of Type 2 under Case (iv)}
 \label{case(iv)}
\end{figure}
%\vspace{10cm}

\blema
If we have two disjoint ellipses $E_1$ and $E_2$ in $\mathbb R^3$, such that none of them intersect the interior of the other, then their interiors are completely disjoint.
\elema
\begin{proof}
Take the ellipse $E_1$ and the plane $P_1$ in which it lies. Suppose some interior point  $p$ of it is also an interior point of $E_2$. It means a line segment $l$, with end points as points on $E_2$ pass through $p$ i.e. $p$ is an interior point of $l$. So, either these end points of $l$ lie on plane $P_1$ itself or one each in upper half and lower half of space which is divided by this plane. In any case, ellipse will cut the $P_1$ in at least two places. Mark them as $i_1$ and $i_2$. If any of these points lie inside the interior part of $E_1$, it means interior o ellipse $E_1$ is intersected by $E_2$. If they lie outside, then a line from $p$ to $i_1$ or $p$ to $i_2$ will comprise of interiors points of $E_2$ and is intersected by points of $E_1$. So, if some point lie in interior of both the ellipses, at least one of them would intersect the other in interior part.
\end{proof}
%\begin{rmrk}
%Same thing implies when two ellipses are glued. In that case, one of the $i_1$ or $i_2$ would be the glued point.  
%\end{rmrk}

So, projection of line segments $m$ and $n$ can not intersect as their preimages lie on same plane $P_2$ and do not intersect each other. Up to planar isotopy, the diagram with no crossing information will look like Figure \ref{case_(iv)}. As the two set of chords are parallel to $m$ and $n$, in the projection they can not give alternate over and under crossings. This is because for that chord has to cut the plane $P_2$. Therefore, those particular chords can be pulled apart either on the left or right hand side of $E_2$ to give only four crossings in the new diagram and out of those two will be disappeared after smoothing glued spots. As a result, we will always get unknots. %It is easy to see that we always get unknots because these parallel lines will have similar over- under crossing as with the case of two ellipses. 
\begin{figure}[h]
\centering
\includegraphics[width=0.5\textwidth]{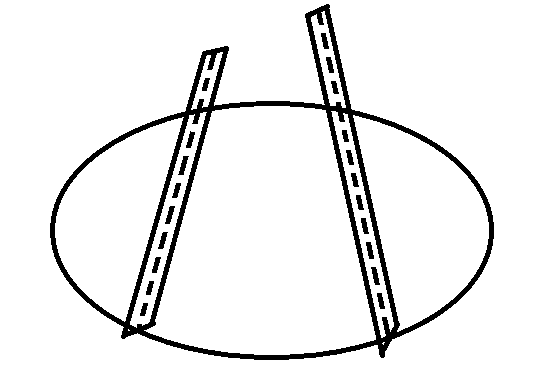}
\caption{Subcase of Type 2 under Case (iv) after isotoping ellipses $E_1$ and $E_3$}
 \label{case_(iv)}
\end{figure}

%Now, for the fifth case in the table, the middle ellipse neither intersect interior of other ellipses nor its interior is intersected by any other ellipse. Suppose we want to study what projections one can get in these cases. Actually, we will try to reduce each case to one of finite standard diagrams. We will try to shrink ellipses from inside as long as possible. We note that we can always shrink one of the three ellipses to almost a line. 
Before moving to case (v), we will observe some procedures that we will use very frequently in the next few cases. Note that the first two procedures can be followed on similar lines if $E_1$ and $E_3$ are interchanged.\newline

%$\noindent{\bf (Procedure 1)\,}$
\procedure{1}
($E_1$ and $E_3$ are linked and interior of $E_2$ is not intersected by any other ellipse.) \\Let the points where $E_3$ intersects $P_1$ be $p_1$ and $p_2$. Draw tangents to $E_3$ in the plane $P_3$ from these points. Name these as $T_1$ and $T_2$ and suppose they meet at a point $T$ ( in case they are parallel, we can ensure that they intersect by perturbing $E_3$ in the plane $P_3$). Take the projection of the preglued curve from $T$. The projection of $E_3$ will be a line segment with end points as one interior and one exterior point of projection of $E_1$. Now, shrink $E_2$ as in Lemma 4.3(ii) towards the chord $c$ joining two glued points of $E_2$. The projections(up to planar isotopy) will be of following types.
\begin{figure}[h!]
\centering
\includegraphics[width=1.0\textwidth]{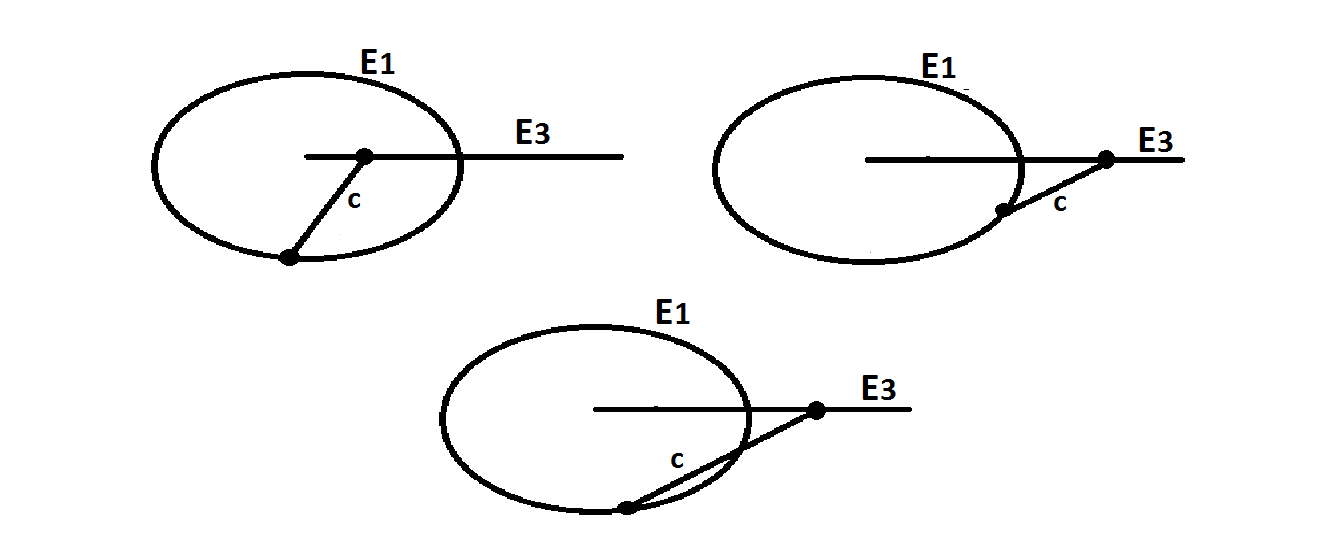}\hfill
\caption{Procedure 1.}
 \label{fig:procedure1}
\end{figure}

In the top two projections of Figure~\ref{fig:procedure1}, suitable perturbation of point of projection will reveal three ellipses with 6 double points and out of them 2 will be the glued points. Therefore, after smoothing the glued knot will have four crossings, which means that no knot with more than crossing number invariant four can be made by this case. In the third kind of diagram, by suitable perturbations of point of projections, eight double points will appear but we can always reduce it into a diagram with six double points. Note that with respect to the plane of shaded ellipse as shown in Figure \ref{fig:procedure1_3}, the chord of $E_2$ joining $E_1$ and $E_2$ will either lie completely towards the top-side or completely to the down side of the plane. This will determine the crossings near the point $p$ in the figure. If the chord is on the top side, we get the crossings near p as in $D_2$ and the one where chord is on the other side of the plane, we get crossings similar to $D_1$. Reidemeister moves can be performed in both diagrams to reduce the crossings up to 6 out of which two are glued spots. So, knots up to unknots, trefoils and figure eight knots are possible.       
\begin{figure}[h!]
\centering
\includegraphics[width=1.0\textwidth]{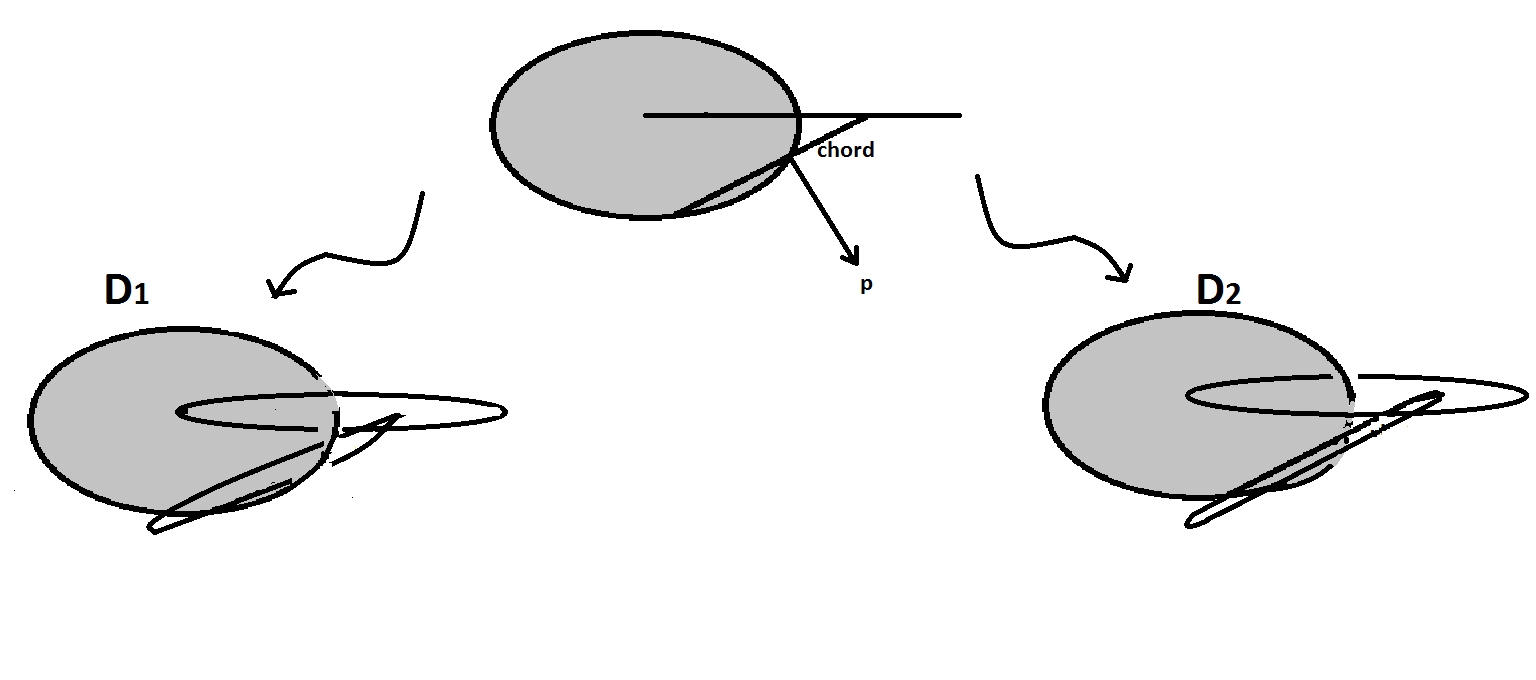}\hfill
\caption{Procedure 1(3).}
 \label{fig:procedure1_3}
\end{figure}

%$\noindent{\bf (Procedure 2)\,}$
\procedure{2}
($E_2$ intersects the interior of $E_1$ and interior of $E_3$ is intersected only by $E_1$ and in one point).\\
Let the points where $E_2$ intersects the plane of $E_1$ be $p_1$ and $p_2$. Draw tangents to these points on $E_2$ as done with $E_3$ in Procedure 1. Project from the point of intersections of these tangent lines. And finally with the help of Lemma 4.3(ii), it is possible to isotope $E_3$ to a curve which lies in the small neighbourhood of the chord $c$ which passes through the glued point of $E_3$ with $E_2$ and a point where $E_1$ intersects interior of $E_3$. Up to planar isotopy, the following projection will appear.
\begin{figure}[h]
\centering
\includegraphics[width=0.7\textwidth]{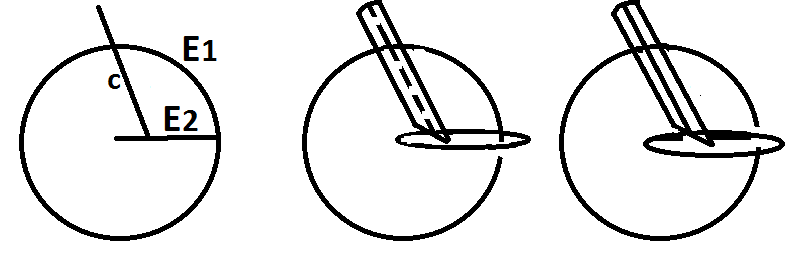}\hfill
\caption{Procedure 2.}
\end{figure}
Slightly perturbing the point of projections will reveal the 6 double points and after smoothing we will get 4 crossings. So, only unknot, trefoil and figure eight are possible.

%$\noindent{\bf (Procedure 3)\,}$
\procedure{3}
(Either one of $E_1$ or $E_3$ intersects the other's interior in two points and $E_2$ is not intersected by any other strand from inside).\\
Apply first step same as done in Procedure 1. The only difference in this case would be that instead of a line segment intersecting the projection of ellipse in the projection, we will get a line segment completely inside the projection of ellipse. 
\begin{figure}[h]
\centering
\includegraphics[width=0.7\textwidth]{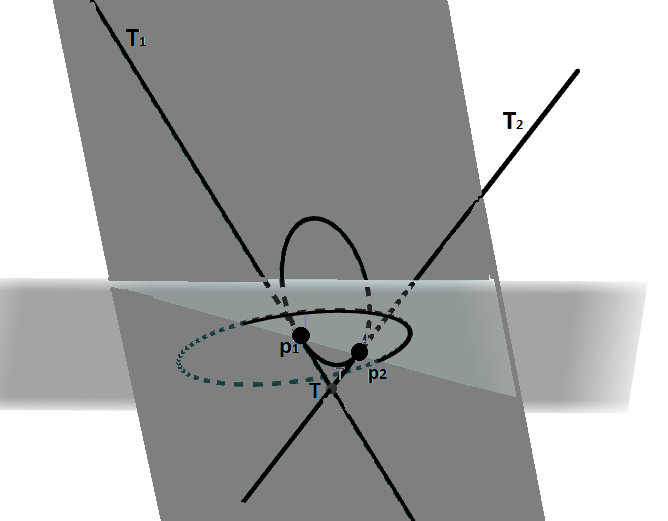}\hfill
\caption{ Initial step in Procedure 3. Figure is made using software GeoGebra.}
\end{figure} 
%\vspace{10cm}

Now, $E_2$ can isotoped, with the help of Lemma 4.3(i), towards a chord whose two end points are the two glued points with $E_1$ and $E_3$, and by an isotopy that is supported on the interior of the ellipse. So, the projection (up to planar isotopy) after perturbing the points of projection slightly will be as shown in Figure~\ref{procedure3}. In this case also maximum crossing number of knot that we can get after gluing is four.
\begin{figure}[h]
\centering
\includegraphics[width=0.7\textwidth]{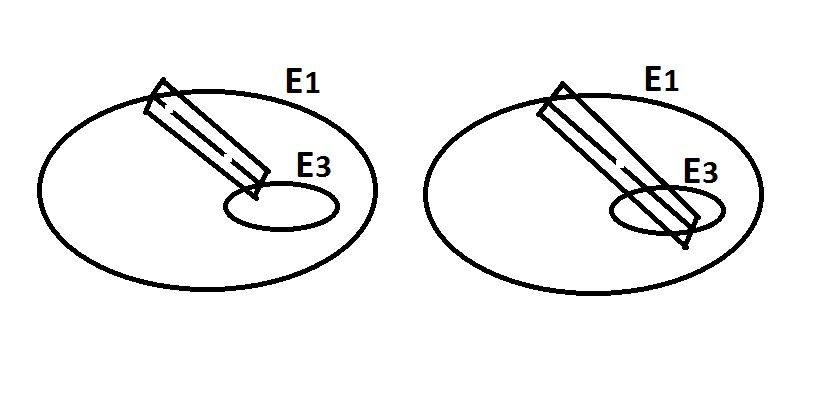}\hfill
\caption{Procedure 3.}
 \label{procedure3}
\end{figure}

%$\noindent{\bf Case (v)\,}$
\case{(v)}
In this case, pair of only $E_1$ and $E_3$ intersect from interiors and by Lemma 4.2(ii), they can do so in the following three ways (when they intersect):%\\

\begin{itemize}
\item $E_1$ intersects the interior of $E_3$ in two points.
\item $E_3$ intersects the interior of $E_1$ in two points.
\item Both $E_1$ and $E_3$ intersect each other's plane in one point from interior and one point from exterior. 
\end{itemize} 
$E_2$ is not intersecting the interiors of other ellipses. So, if one of $E_3$ or $E_1$ intersect the interior of other in two points, the interior of that particular ellipse is not being intersected by any other ellipse.  So, we can isotope it in a small neighbourhood around gluing point. For the third sub-case, Procedure 1 can be applied. This case also has potential to yield knots up to 4 crossings. 
\\

%\par
%$\noindent{\bf Case(vi)\,}$
\case{(vi)}
In this case, sub-cases can be further divided into two main types of sub-cases. The first in which $E_1$ intersect the interior of $E_2$ and second in which $E_2$ intersects the interior of $E_1$.

\paragraph{Sub-case of Type 1:} In the first type, the interior of $E_1$ can be intersected by $E_3$ only. If it happens that it is intersected by $E_3$ in two interior points, due to the fact that $E_2$ does not intersect $E_3$ and by Lemma 4.2(ii) the interior of $E_3$ is completely free (neither of $E_3$ or $E_2$ intersect each other from interior) to shrink it from inside and we will get an unknot. While on the other hand if $E_1$ intersects interior of $E_3$ in two points, then its interior is not intersected by other ellipses, therefore it is again an unknot after performing gluing procedure. If, let $E_1$ and $E_2$ are linked

So, the third ellipse has only one point in the interior where it is being intersected and the first ellipse intersects the middle ellipse's interior in one point. We will do the similar thing as done in Procedure 2, but instead of $E_2$ we will project $E_1$ as a line. Now, the projections (up to planar isotopy) can be of the types as in figure \ref{6_1_3}.
\begin{figure}[h]
\centering
\includegraphics[width=1.0\textwidth]{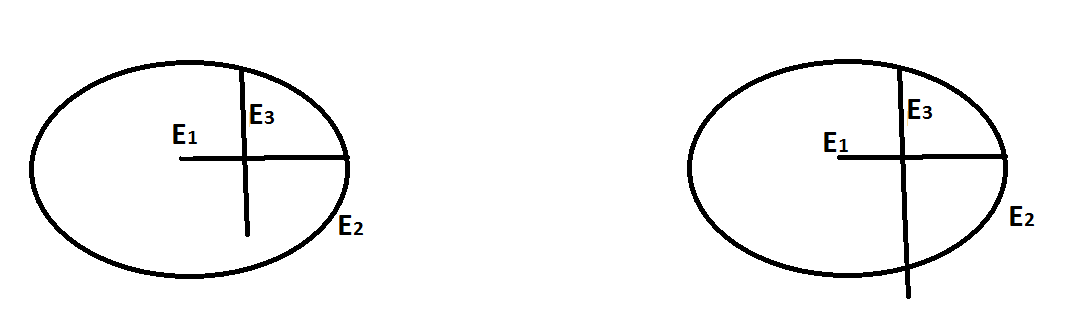}\hfill
\caption{Sub-case of Type 1 under case (vi)}
 \label{6_1_3}
\end{figure}

Its simple to inspect that with the given relations between three pairs of ellipses, this setup can easily be reduced to diagrams to knots with at most 4 crossings.

\paragraph{Sub-case of Type 2:} $E_2$ intersects $E_1$ from inside. In this case, $E_2$ is free from any intersection from its interior. Again, if $E_3$ is intersecting the first ellipse in two interior points, it can be shrunk towards glued point to get an unknot. If $E_1$ intersects interior of $E_3$ in two points, Procedure 3 can be applied. If, $E_1$ and $E_3$ are linked, Procedure 1 can be followed.%\\ %, say $p_1$ and $p_2$. We will make tangents $T_1$ and $T_2$ on first ellipse passing through $p_1$ and $p_2$. We can make sure $T_1$ and $T_2$ intersect at a point say $T$.%
\case{(vii)}
is the same as Case vi, we just have to replace $E_1$ with $E_3$ and vice versa.%\\

%\par
%$\noindent{\bf Case(viii)\,}$
\case{(viii)} Here all the three ellipses intersect interiors of each other. The table for sub-cases is as follows:-
\begin{center}
\begin{tabular}{ | p{0.1\linewidth} | p{0.25\linewidth}| p{0.25\linewidth} | p{0.25\linewidth} |  } 
\hline
S.No.& Relation between $E_1$ and $E_2$ intersecting from interior &  Relation between $E_2$ and $E_3$ intersecting from interior & Relation between $E_1$ and $E_3$ intersecting from interior.\\
\hline
1 & $E_1$ intersects $E_2$ & $E_3$ intersects $E_2$ & $E_1$ intersects $E_3$ in two points\\
\hline
2 & $E_1$ intersects $E_2$ & $E_3$ intersects $E_2$ & $E_3$ intersects $E_1$ in two points\\
\hline
3 & $E_1$ intersects $E_2$ & $E_3$ intersects $E_2$ & $E_1$ and $E_3$ are linked\\
\hline
4 & $E_1$ intersects $E_2$ & $E_2$ intersects $E_3$ & $E_1$ intersects $E_3$ in two points\\
\hline
5 & $E_1$ intersects $E_2$ & $E_2$ intersects $E_3$ & $E_3$ intersects $E_1$ in two points\\
\hline
6 & $E_1$ intersects $E_2$ & $E_2$ intersects $E_3$ & $E_1$ and $E_3$ are linked \\
\hline
7 & $E_2$ intersects $E_1$ & $E_3$ intersects $E_2$ & $E_1$ intersects $E_3$ in two points\\
\hline
8 & $E_2$ intersects $E_1$ & $E_3$ intersects $E_2$ & $E_3$ intersects $E_1$ in two points\\
\hline
9 & $E_2$ intersects $E_1$ & $E_3$ intersects $E_2$ & $E_1$ and $E_3$ are linked\\
\hline
10 & $E_2$ intersects $E_1$ & $E_2$ intersects $E_3$ & $E_1$ intersects $E_3$ in two points\\
\hline
11 & $E_2$ intersects $E_1$ & $E_2$ intersects $E_3$ & $E_3$ intersects $E_1$ in two points\\
\hline
12 & $E_2$ intersects $E_1$ & $E_2$ intersects $E_3$ & $E_1$ and $E_3$ are linked\\
\hline
\end{tabular}
\end{center}

Sub-cases 1 and 2 can be obtained from each other by replacing $E_1$ with $E_3$ and vice versa and so are 4 and 8, 5 and 7, 6 and 9, 10, and 11. Therefore, the sub-cases which are left to examine are 1, 3, 4, 5, 6, 10, and 12.

In the sub-cases 1 and 4, the interior of $E_1$ in not intersected by any other ellipse, and hence can be shrunk towards the glued points to get unknots after performing the gluing procedure. Sub-case 10 can be reduced to a diagram similar to that in Figure \ref{procedure3} by applying Procedure 3. In sub-case 3, we can standardize the projection as in a sub-case of type 2 of case 6 (Figure \ref{6_1_3}). In the $5^{th}$ sub-case, join the points of intersection of $E_2$ with the plane of $E_3$ and extend it to meet another point of $E_3$ to make a chord of $E_3$.  Project the setup from a point where tangents to $E_1$ passing through points where it intersects the plane of $E_2$. One of the points is inside $E_2$, and second is the glued point. Isotoping $E_3$ to get the projection along a small neighbourhood of the chord, up-to planar isotopy, will give the diagram as shown in Figure \ref{8_5subcase}.%\\\\\\\\\\\\\\\\\\\\\\\\\\\\

Again, all these can be reduced to diagrams with 4 crossings and therefore up-to figure eight can be achieved from this.

In the $6^{th}$ sub-case, Procedure 2 can be applied. $E_2$ intersects $E_3$ and $E_1$ can be isotoped along a chord with one of the points of $E_2$ (glued point) as an end point and passing through a point on $E_3$ (Figure \ref{subcase6of8}).% and  Project from the intersection points of tangents of $E_2$ passing through the points where $E_2$ intersects the plane of $E_3$. Join the glued point of $E_1$ and the point where it is being intersected by $E_3$ from interior and extend it to hit another point of $E_1$. Now, $E_1$ can be shrinked to a small neighbourhood around this chord. So, the setup will look like the following:-\\%

For the $12^{th}$ sub-case, Procedure 1 can be applied. (See Figure \ref{fig:procedure1}). 
\begin{figure}[h]
\centering
\includegraphics[width=0.5\textwidth]{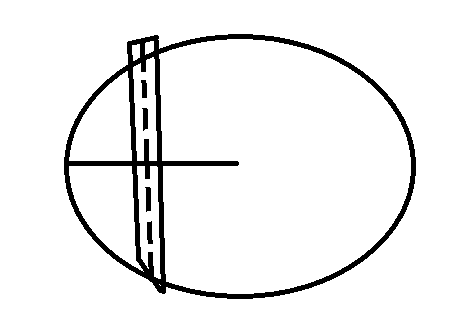}
\caption{sub-case 5 under case (viii)}
 \label{8_5subcase}
\end{figure}
\begin{figure}[h]
\centering
\includegraphics[width=0.5\textwidth]{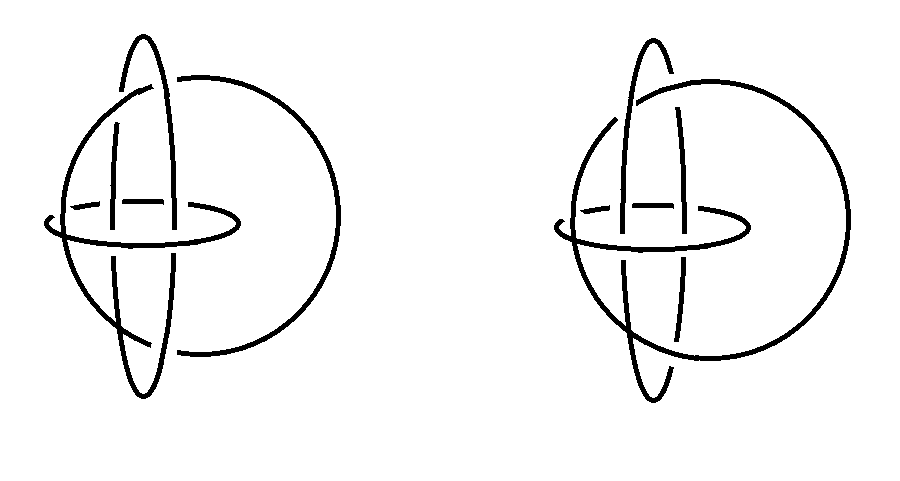}\hfill
\includegraphics[width=0.5\textwidth]{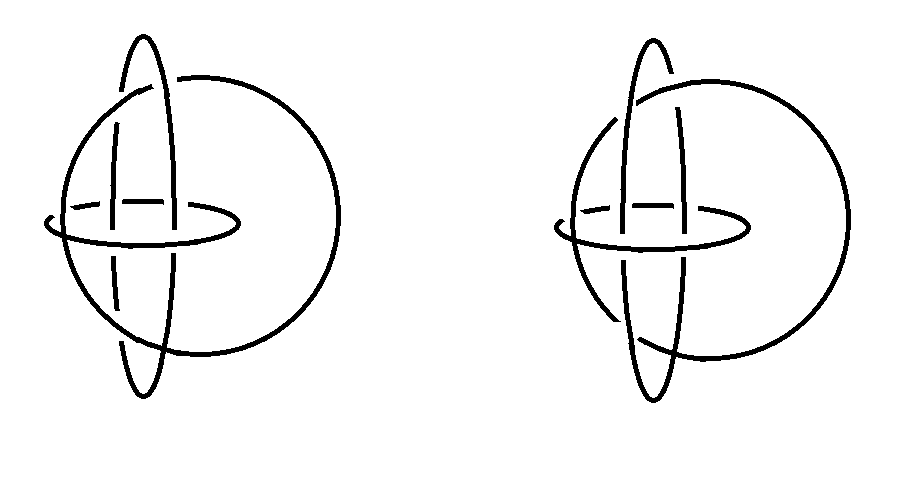}\hfill
\includegraphics[width=0.5\textwidth]{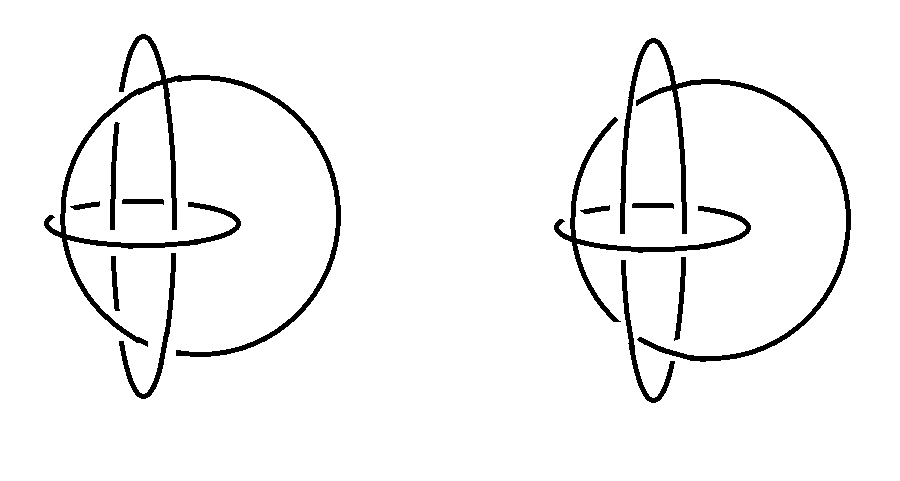}\hfill
\includegraphics[width=0.5\textwidth]{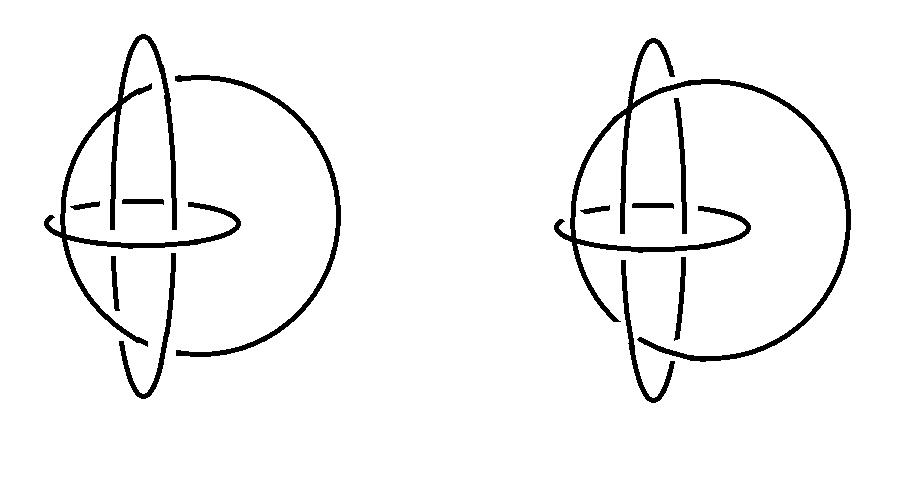}\hfill
\caption{some diagrams of preglued curves for sub-case 5 under case(viii)}
\end{figure}

\begin{figure}[t]
\centering
\includegraphics[width=0.5\textwidth]{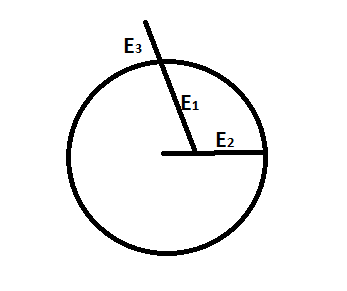}
\caption{Sub-case 6 of case 8}
 \label{subcase6of8}
\end{figure}
%\vspace{10cm}
  
\clearpage

\bibliographystyle{unsrt}
\bibliography{references}

\begin{thebibliography}{1}

\bibitem{bjorklund}
Johan Bj\"{o}rklund.
\newblock Real algebraic knots of low degree.
\newblock {\em J. Knot Theory Ramifications}, 20(9):1285--1309, 2011.

\bibitem{ramashane}
Shane D'Mello and Rama Mishra.
\newblock Constructing real rational knots by gluing.
\newblock {\em Topology Appl.}, 237:67--81, 2018.

\bibitem{mikhalkin}
Grigory Mikhalkin and Stepan Orevkov.
\newblock Maximally writhed real algebraic links.
\newblock {\em Invent. Math.}, 216(1):125--152, 2019.

\bibitem{viroencomplex}
Oleg Viro.
\newblock Encomplexing the writhe.
\newblock In {\em Topology, ergodic theory, real algebraic geometry}, volume
  202 of {\em Amer. Math. Soc. Transl. Ser. 2}, pages 241--256. Amer. Math.
  Soc., Providence, RI, 2001.

\bibitem{viroskew}
O.~Ya. Viro and Yu.~V. Drobotukhina.
\newblock Configurations of skew-lines.
\newblock {\em Algebra i Analiz}, 1(4):222--246, 1989.

\bibitem{murasugi}
Kunio Murasugi.
\newblock Jones polynomials and classical conjectures in knot theory.
\newblock {\em Topology}, 26(2):187--194, 1987.

\end{thebibliography}
\end{document}